%

\documentclass[11pt,a4paper]{article}
\usepackage{amsmath}
\usepackage{amssymb}
\usepackage{amsthm}
\usepackage{amsfonts}
\usepackage{enumerate}
\usepackage{pstricks}
\usepackage{pst-plot}
\usepackage{pst-poly}
\usepackage{url}
\usepackage{booktabs}
\usepackage{verbatim}
\usepackage{hyperref}
\usepackage{breakurl,url}
\usepackage[margin=1.3in]{geometry}
\frenchspacing



\addtolength{\parskip}{0.5ex}

%
\def\tluste#1{\protect{\textrm{\boldmath $#1$}}}
\newcommand{\vr}[1]{{{#1}}}

\newcommand{\mace}[1]{{{#1}}}
\newcommand{\mna}[1]{{\mathcal{#1}}}

\newcommand{\omace}[1]{\mbox{$\overline{\mace{#1}}$}} 
\newcommand{\umace}[1]{\mbox{$\underline{\mace{#1}}$}} 
\newcommand{\imace}[1]{\mbox{$\tluste{#1}$}}

\def\Mid#1{#1^c}
\def\Rad#1{#1^\Delta}
\newcommand{\ovr}[1]{\mbox{$\overline{\vr{#1}}$}} 
\newcommand{\uvr}[1]{\mbox{$\underline{\vr{#1}}$}}

\newcommand{\onum}[1]{\mbox{$\overline{{#1}}$}} 
\newcommand{\unum}[1]{\mbox{$\underline{{#1}}$}}

\newcommand{\ivr}[1]{\tluste{#1}} 		

\newcommand{\R}[0]{{\mathbb{R}}}

\newcommand{\IR}[0]{{\mathbb{IR}}}


\newcommand{\mmid}[0]{;\,}		

\newcommand{\seznam}[1]{{\{1, \ldots, {#1}\}}}

\def\clqq{``}
\def\crqq{''}
\def\quo#1{\clqq{}#1\crqq{}}  

\newcommand{\sts}[0]{\mathop{\mbox{subject to}}}
\newcommand{\stl}[0]{\sts\ \ }
\newcommand{\st}[0]{\ \ \sts\ \ }

\DeclareMathOperator{\sgn}{sgn}	
\DeclareMathOperator{\diag}{diag}	


\def\nref#1{\eqref{#1}}


\newtheorem{proposition}{Proposition}
\newtheorem{lemma}{Lemma}
\newtheorem{corollary}{Corollary}

\theoremstyle{definition}

\newtheorem{example}{Example}
\newtheorem{remark}{Remark}

\begin{document}

\title{Linear programming sensitivity measured by the optimal value worst-case analysis}

\author{
  Milan Hlad\'{i}k\footnote{
Charles University, Faculty  of  Mathematics  and  Physics,
Department of Applied Mathematics, 
Malostransk\'e n\'am.~25, 11800, Prague, Czech Republic, 
e-mail: \texttt{milan.hladik@matfyz.cz}
}}

\date{\today}
\maketitle

\begin{abstract}
This paper introduces a concept of a derivative of the optimal value function in linear programming (LP). Basically, it is the the worst case optimal value of an interval LP problem when the nominal data the data are inflated to intervals according to given perturbation patterns. 
By definition, the derivative expresses how the optimal value can worsen when the data are subject to variation. In addition, it also gives a certain sensitivity measure or condition number of an LP problem.

If the LP problem is nondegenerate, the derivatives are easy to calculate from the computed primal and dual optimal solutions. For degenerate problems, the computation is more difficult. We propose an upper bound and some kind of characterization, but there are many  open problems remaining.

We carried out numerical experiments with specific LP problems and with real LP data from Netlib repository. They show that the derivatives give a suitable sensitivity measure of LP problems. It remains an open problem how to efficiently and rigorously handle degenerate problems.
\end{abstract}

\textbf{Keywords:}\textit{ linear programming, sensitivity analysis, interval analysis, condition number, stability, derivative.}

\section{Introduction}

Stability and sensitivity are important topics of investigation in linear programming (LP). 
They study the question how the optimal solution and optimal value change in a local neighborhood of the data; the sensitivity analysis often focuses more on the change in a specific direction of the data variations on a certain stability region \cite{ArsObl1990,CurLee2022,Gal1995,GalGre1997,HadTer2006b,LuBor2022p,Luc2012,MosSol2014}.
The related areas of parametric programming \cite{Gal1995,GalGre1997,Hla2010b,MehHad2021,MehHad2022p} and tolerance analysis \cite{BorBuz2018,Fil2005,Hla2011c,WarWen1990,Wen1997} study possibly larger and independent perturbations of LP coefficients.

A condition number measures how much errors or other changes in data affect the results of a calculation. This is an important concept in numerical linear algebra \cite{GolLoa1996,Hig1996}, but the idea has been used in other algebraic and optimization problems \cite{CouCro2009,SonXu2015,ZamHla2020ab}, too. Particularly in linear programming, it quantitatively expresses a stability of an LP problem. The early works are by Mangasarian \cite{Mang1981}, Renegar~\cite{Ren1994,Ren1995} and others afterward. A unification of some of the concepts was presented by Cheung et al.~\cite{CheuCuc2003}. Computing a condition number is usually a computationally demanding problem, however, Freund and Vera~\cite{FreVer2003} presented some polynomially solvable cases emerging when a suitable matrix norm is used.

This paper introduces the concept of a worst case derivative of the optimal value in linear programming. Thus, the value measures the worst rate of change of the optimal value in a small neighborhood. It can also serve as a kind of condition number since it expresses a local sensitivity of an LP problem.

Throughout the paper (with an explicitly stated exception of Section~\ref{ssOtherLP}), we consider an LP problem in the form
\begin{align}\label{lpGen}
f(A,b,c)=\min\ c^Tx \st Ax=b,\ x\geq0,
\end{align}
where $A\in\R^{m\times n}$, $b\in\R^m$ and $c\in\R^n$. 
For an index set $B\subseteq\seznam{n}$, we denote by $A_B$ the restriction of $A$ to the columns indexed by~$B$; when $A_B$ is nonsingular, then $B$ is called a basis. The nonbasic indices are denoted by~$N$.

The other notation used is as follows. 
The sign of a real $r$ is defined $\sgn(r)=1$ if $r\geq0$ and $\sgn(r)=-1$ otherwise; for vectors we apply it entrywise. 
Further, $I_n$ is the identity matrix of size $n\times n$, $e$ is the vector of ones with convenient dimension, and $\diag(v)$ is the diagonal matrix with entries given by the vector $v=(v_1,\dots,v_n)^T$. The Frobenius norm of a matrix $M$ is denoted by $\|M\|_F$.

\paragraph{Interval data.}
Since our approach builds on the theory of interval analysis and interval programming, we have to remind some essential notation and results first. An interval matrix is defined as the set
$$
\imace{A}
=[\umace{A},\omace{A}]
=\{A\in\R^{m\times n}\mmid \umace{A}\leq A\leq\omace{A}\},
$$
where $\umace{A},\omace{A}\in\R^{m\times n}$, $\umace{A}\leq\omace{A}$ are given matrices. 
The midpoint and the radius matrix are defined, respectively, as 
$$
\Mid{A}:=\frac{1}{2}(\umace{A}+\omace{A}),\quad
\Rad{A}:=\frac{1}{2}(\omace{A}-\umace{A}).
$$
The set of all $m\times n$ interval matrices is denoted by $\IR^{m\times n}$. 
Similar notation is used for interval vectors, considered as one column interval matrices, and interval numbers. 
For interval arithmetic see, e.g., the textbooks \cite{HanWal2004,MooKea2009}.

\paragraph{Interval linear programming.}
Let $\imace{A}\in\IR^{m\times n}$, $\ivr{b}\in\IR^m$ and $\ivr{c}\in\IR^n$ be given. By \emph{an interval linear programming problem} we mean a family of LP problems \nref{lpGen} with $A\in\imace{A}$, $b\in\ivr{b}$ and $c\in\ivr{c}$. A particular LP problem from this family is called a \emph{realization}.
References \cite{Fie2006,Hla2012a} present a survey on this topic.

The best case and worst case optimal values are defined as
\begin{align*}
\unum{f}(\imace{A},\ivr{b},\ivr{c})
 &:=\min\ f(A,b,c) \st A\in\imace{A},\ b\in\ivr{b},\ c\in\ivr{c},\\
\onum{f}(\imace{A},\ivr{b},\ivr{c})
 &:=\max\ f(A,b,c) \st A\in\imace{A},\ b\in\ivr{b},\ c\in\ivr{c}.
\end{align*}
The best case optimal value is easily computable by solving a specific linear program
\begin{align*}
\unum{f}(\imace{A},\ivr{b},\ivr{c})&= \min\ \uvr{c}^Tx \st
 \umace{A}x\leq\ovr{b},\ -\omace{A}x\leq-\uvr{b},\ x\geq 0,
\end{align*}
see \cite{Fie2006,Mach1970,Roh1976}.
On the other hand, computing the worst case optimal value $\onum{f}$ is NP-hard \cite{GabMur2008,GabMur2010,Roh1997}. Some computationally cheap bounds were proposed in \cite{Hla2014d,MohGen2019}. An explicit but exponential formula for $\onum{f}$ is due to \cite{Fie2006,Roh1984}. It reduces the problem into $2^m$ instances of real-valued LP problems:
\begin{align}\label{thmOvrTypaOnumf}
\onum{f}(\imace{A},\ivr{b},\ivr{c})&= \max_{s\in\{\pm1\}^m} 
  f(\Mid{A}-\diag(s)\Rad{A},\,
   \Mid{b}+\diag(s)\Rad{b},\,\ovr{c}),
\end{align}
where $\diag(s)$ denotes the diagonal matrix with entries $s_1,\dots,s_m$.

Notice that interval LP problem in other basic forms have different properties in general and the standard transformations need not be equivalent \cite{GarHla2019c}.
A unified method to approach the optimal value range problem was presented in \cite{Hla2009b}.

\paragraph{Structure of the paper.}
Section~\ref{sDw} introduces our sensitivity measure based on the worst-case optimal value derivative. For a nondegenerate probem, it can be easily calculated by an explicit formula. The degenerate case is discussed in Section~\ref{ssDegen}, where we present basic properties and propose an upper bound; Section~\ref{ssCompl} shows then that the problem is NP-hard. Another characterization of the sensitivity measure is given in Section~\ref{ssAlpha}. Since the measure is not invariant with respect to the scaling of the perturbation patterns, we also introduce its normalized version (Section~\ref{ssNorm}). Section~\ref{ssOtherLP} discusses LP problems in other canonical forms; it turns out that while equality constraints are intractable, inequalities could be tractable, but the conjecture remains open. Section~\ref{sEx} illustrates the sensitivity measure on various well and ill-conditioned problems; we also experiment with the benchmark data from Netlib Repository.

\section{Worst-case optimal value derivatives}\label{sDw}

Directional derivatives of the optimal value function in some given direction were already discussed; see, e.g., Freund \cite{Fre1985} or Gal and Greenberg \cite[chap.~3]{GalGre1997}. 
Our approach is different as we consider the worst case derivative.

\paragraph{Worst case type derivative.}
Let $\Rad{A}\geq0$, $\Rad{b}\geq0$ and $\Rad{c}\geq0$ be given and define an interval matrix and interval vectors
\begin{align*}
\imace{A}_\alpha &:=[A-\alpha\Rad{A},\,A+\alpha\Rad{A}],\\ 
\ivr{b}_\alpha   &:=[b-\alpha\Rad{b},\,b+\alpha\Rad{b}],\\ 
\ivr{c}_\alpha   &:=[c-\alpha\Rad{c},\,c+\alpha\Rad{c}],
\end{align*}
depending on a parameter $\alpha\geq0$.

We introduce the derivative of the worst optimal value with respect to $\alpha$, that is, the derivative $d_w(A,b,c)$ of the function $\alpha\mapsto \onum{f}(\imace{A}_\alpha,\ivr{b}_\alpha,\ivr{c}_\alpha)$. For $\alpha=0$, we have $\onum{f}(\imace{A}_\alpha,\ivr{b}_\alpha,\ivr{c}_\alpha)=f(A,b,c)$. So by definition of the derivative,
\begin{align*}
d_w(A,b,c,\Rad{A},\Rad{b},\Rad{c})=\lim_{\alpha\to0^+} 
 \frac{\onum{f}(\imace{A}_\alpha,\ivr{b}_\alpha,\ivr{c}_\alpha)-f(A,b,c)}{\alpha}.
\end{align*}
We will write simply $d_w$ or $d_w(A,b,c)$ instead of $d_w(A,b,c,\Rad{A},\Rad{b},\Rad{c})$ when there is no ambiguity.

\paragraph{The formula for $d_w$.}
To derive a closed form formula for $d_w(A,b,c)$, we first remind auxiliary results from Rohn \cite{Roh1989b}, see also Tigan and Stancu-Minasian \cite{TigSta2000}.

\begin{lemma}\label{lmmRohn}
If \nref{lpGen} has a unique nondegenerate optimal solution $x^*$ corresponding to a basis~$B$, and if $y^*$ is a dual optimal solution, then for a sufficiently small $\alpha>0$ and for every $A'\in\imace{A}_\alpha$, $b'\in\ivr{b}_\alpha$ and $c'\in\ivr{c}_\alpha$
\begin{align}\label{eqRohnPertF}
f(A',b',c')
=f(A,b,c) +  y^*{}^T(A-A')x^*
  + y^*{}^T(b'-b)+ x^*{}^T(c'-c)
  +\mna{O}(\alpha^2).
\end{align}
\end{lemma}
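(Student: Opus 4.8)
The plan is to exploit the fact that a unique nondegenerate optimal basis $B$ is stable under small perturbations of the data. First I would invoke standard LP sensitivity theory: if $x^*$ is the unique nondegenerate optimal solution with basis $B$, then $x^*_B = A_B^{-1}b > 0$ (primal nondegeneracy means all basic variables are strictly positive) and the reduced cost vector $c_N^T - c_B^T A_B^{-1} A_N > 0$ is strictly positive (dual nondegeneracy). Both conditions are strict inequalities, so by continuity there is $\alpha_0 > 0$ such that for all $\alpha \le \alpha_0$ and all $A' \in \imace{A}_\alpha$, $b' \in \ivr{b}_\alpha$, $c' \in \ivr{c}_\alpha$, the same basis $B$ remains feasible and optimal for the perturbed problem $f(A',b',c')$. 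This is the geometric heart of the argument and the step I would expect to require the most care, since one must bound the perturbation of $A_B^{-1}$, of $A_B^{-1}b$, and of the reduced costs simultaneously and uniformly over the box of realizations; a Neumann-series estimate for $(A'_B)^{-1} = A_B^{-1} + \mna{O}(\alpha)$ does the job.

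Next I would write down the optimal value along the retained basis. For any realization, $f(A',b',c') = c'^T_B (A'_B)^{-1} b'$. Setting $\Delta A = A' - A$, $\Delta b = b' - b$, $\Delta c = c' - c$, each of order $\mna{O}(\alpha)$, I would expand $(A'_B)^{-1} = A_B^{-1} - A_B^{-1}(\Delta A)_B A_B^{-1} + \mna{O}(\alpha^2)$ and multiply out $ (c_B + \Delta c_B)^T\big(A_B^{-1} - A_B^{-1}(\Delta A)_B A_B^{-1}\big)(b + \Delta b)$, keeping only zeroth- and first-order terms. Using $x^* = (A_B^{-1}b, 0)$ supported on $B$, the dual optimal solution $y^* = A_B^{-{T}} c_B$, and the complementarity relation $c^Tx^* = b^Ty^*$, the first-order terms collect into exactly $y^{*T}(\Delta b) + x^{*T}(\Delta c) - y^{*T}(\Delta A)x^*$, which is the claimed formula $f(A,b,c) + y^{*T}(A - A')x^* + y^{*T}(b' - b) + x^{*T}(c' - c) + \mna{O}(\alpha^2)$. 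Here I should be careful that $(\Delta A)_B A_B^{-1} b = (\Delta A) x^*$ because $x^*$ vanishes outside $B$, and similarly $\Delta c_B^T A_B^{-1} b = (\Delta c)^T x^*$.

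Finally, I would remark that the $\mna{O}(\alpha^2)$ bound is uniform over all realizations $A' \in \imace{A}_\alpha$, $b' \in \ivr{b}_\alpha$, $c' \in \ivr{c}_\alpha$, since all the constants in the Neumann-series and product expansions depend only on $\|A_B^{-1}\|$, $\|b\|$, $\|c\|$, $\Rad{A}$, $\Rad{b}$, $\Rad{c}$ and not on the particular realization. The main obstacle, as noted, is establishing persistence of the basis uniformly over the whole interval box together with a clean control of the remainder; once that is in hand, the rest is the routine first-order product expansion sketched above.
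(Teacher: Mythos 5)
Your proof is correct. Note, however, that the paper does not prove this lemma at all: it is quoted verbatim as an auxiliary result of Rohn \cite{Roh1989b} (see also Tigan and Stancu-Minasian \cite{TigSta2000}), so there is no in-paper argument to compare against. What you wrote is the standard proof one would expect behind the citation: uniqueness plus primal nondegeneracy give $A_B^{-1}b>0$ and strictly positive nonbasic reduced costs, whence the basis $B$ persists for all realizations in $(\imace{A}_\alpha,\ivr{b}_\alpha,\ivr{c}_\alpha)$ once $\alpha$ is small, and the Neumann-series expansion of $(A'_B)^{-1}$ in $f(A',b',c')=c'^T_B(A'_B)^{-1}b'$ collects the first-order terms into $y^{*T}(A-A')x^*+y^{*T}(b'-b)+x^{*T}(c'-c)$ with a remainder that is uniformly $\mna{O}(\alpha^2)$ over the interval box. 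One small point worth making explicit: uniqueness of $x^*$ alone does not give strictly positive nonbasic reduced costs; you need to combine it with primal nondegeneracy (a zero reduced cost plus a nondegenerate ratio test would produce an alternative optimum), and primal nondegeneracy is also what guarantees that the dual optimum $y^*=A_B^{-T}c_B$ appearing in the formula is the unique one. With that observation your argument is complete and self-contained, which is arguably a service the paper itself does not provide.
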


\begin{lemma}\label{lmmRohnWorst}
If \nref{lpGen} has a unique nondegenerate optimal solution $x^*$ corresponding to a basis~$B$, and if $y^*$ is a dual optimal solution, then for sufficiently small $\alpha>0$
\begin{align*}
\onum{f}(\imace{A}_\alpha,\ivr{b}_\alpha,\ivr{c}_\alpha)
& = f(A,b,c) + \alpha |y^*|^T\Rad{A}x^* + \alpha |y^*|^T\Rad{b}
    +\alpha x^*{}^T\Rad{c} +\mna{O}(\alpha^2)\\
& = f(A,b,c) + \alpha |y^*|^T\Rad{A}_Bx^*_B + \alpha |y^*|^T\Rad{b}
    +\alpha (x^*_B)^T\Rad{c}_B +\mna{O}(\alpha^2).
\end{align*}
\end{lemma}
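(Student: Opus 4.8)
The plan is to derive Lemma~\ref{lmmRohnWorst} directly from Lemma~\ref{lmmRohn} by optimizing the first-order perturbation term over the admissible data box. Starting from \nref{eqRohnPertF}, for $A'\in\imace{A}_\alpha$, $b'\in\ivr{b}_\alpha$, $c'\in\ivr{c}_\alpha$ we have
\begin{align*}
f(A',b',c') = f(A,b,c) + y^*{}^T(A-A')x^* + y^*{}^T(b'-b) + x^*{}^T(c'-c) + \mna{O}(\alpha^2),
\end{align*}
so that $\onum{f}(\imace{A}_\alpha,\ivr{b}_\alpha,\ivr{c}_\alpha)$ equals $f(A,b,c)$ plus the maximum of the linear form $y^*{}^T(A-A')x^* + y^*{}^T(b'-b) + x^*{}^T(c'-c)$ over the box, plus $\mna{O}(\alpha^2)$. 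The three terms involve disjoint sets of variables ($A'$, $b'$, $c'$), so the maximum separates into three independent subproblems.

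Next I would solve each subproblem in closed form. Writing $A-A' = -\alpha D$ with $D\in[-\Rad{A},\Rad{A}]$ entrywise, the term $y^*{}^T(A-A')x^* = -\alpha\, y^*{}^T D x^* = -\alpha \sum_{i,j} y^*_i D_{ij} x^*_j$ is maximized by choosing $D_{ij} = -\sgn(y^*_i)\sgn(x^*_j)\Rad{A}_{ij}$, which yields $\alpha\sum_{i,j}|y^*_i|\Rad{A}_{ij}|x^*_j| = \alpha |y^*|^T\Rad{A}|x^*|$; since $x^*\geq0$ this is $\alpha|y^*|^T\Rad{A}x^*$. Similarly, $b'-b\in[-\alpha\Rad{b},\alpha\Rad{b}]$ gives $\max y^*{}^T(b'-b) = \alpha|y^*|^T\Rad{b}$, and $c'-c\in[-\alpha\Rad{c},\alpha\Rad{c}]$ with $x^*\geq0$ gives $\max x^*{}^T(c'-c) = \alpha x^*{}^T\Rad{c}$. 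Summing the three contributions produces the first displayed line of the lemma. The second line then follows from $x^*_N = 0$: the nonbasic components of $x^*$ vanish, so $\Rad{A}x^* = \Rad{A}_B x^*_B$ and $x^*{}^T\Rad{c} = (x^*_B)^T\Rad{c}_B$.

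The one point requiring care — the main obstacle — is the interchange of the maximization over the data with the first-order expansion, i.e.\ justifying that $\onum{f}$ is obtained by maximizing only the linear part while the $\mna{O}(\alpha^2)$ remainder does not affect the stated equality. Lemma~\ref{lmmRohn} guarantees the expansion \nref{eqRohnPertF} holds with a \emph{uniform} $\mna{O}(\alpha^2)$ bound over all realizations in the box, for $\alpha$ sufficiently small (this uniformity is implicit in the statement, since $B$ remains an optimal basis for the whole family once $\alpha$ is small enough, and the optimal solutions and objective depend analytically on the data on that region). Given this uniformity, taking the supremum over the box commutes with adding the remainder up to another $\mna{O}(\alpha^2)$ term, so $\onum{f}(\imace{A}_\alpha,\ivr{b}_\alpha,\ivr{c}_\alpha) = f(A,b,c) + \big(\text{max of linear form}\big) + \mna{O}(\alpha^2)$, and the closed-form evaluation of the maximum above completes the proof. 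I would state this uniformity explicitly at the start of the argument, referring back to Lemma~\ref{lmmRohn} and Rohn~\cite{Roh1989b}, so that the rest is a routine sign-optimization computation.
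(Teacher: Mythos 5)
The paper states this lemma without proof, as an auxiliary result recalled from Rohn \cite{Roh1989b}; your derivation from Lemma~\ref{lmmRohn} --- maximizing the separable linear form independently in $A'$, $b'$, $c'$ over the box, using $x^*\geq0$ to drop the absolute values on $x^*$, and absorbing the uniform $\mna{O}(\alpha^2)$ remainder before and after taking the supremum --- is correct and is exactly the standard argument behind the cited result. Your explicit attention to the uniformity of the remainder, and the final reduction to basic components via $x^*_N=0$, are both handled properly.
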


\begin{proposition}\label{propNondeg}
If \nref{lpGen} has the unique nondegenerate optimal solution $x^*$, and if $y^*$ is a dual optimal solution, then
\begin{align*}
d_w
= |y^*|^T\Rad{A}_Bx^*_B + |y^*|^T\Rad{b}+ (x^*_B)^T\Rad{c}_B.
\end{align*}
\end{proposition}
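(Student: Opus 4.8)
The plan is to derive the closed-form expression for $d_w$ directly from Lemma~\ref{lmmRohnWorst}, which already does essentially all of the work. First I would recall the definition of the derivative,
\begin{align*}
d_w=\lim_{\alpha\to0^+}\frac{\onum{f}(\imace{A}_\alpha,\ivr{b}_\alpha,\ivr{c}_\alpha)-f(A,b,c)}{\alpha},
\end{align*}
and then substitute the asymptotic expansion provided by Lemma~\ref{lmmRohnWorst}. Since \nref{lpGen} has a unique nondegenerate optimal solution $x^*$ with basis $B$ and dual optimal solution $y^*$, the hypotheses of Lemma~\ref{lmmRohnWorst} are met, so for sufficiently small $\alpha>0$ we have
\begin{align*}
\onum{f}(\imace{A}_\alpha,\ivr{b}_\alpha,\ivr{c}_\alpha)-f(A,b,c)=\alpha\bigl(|y^*|^T\Rad{A}_Bx^*_B+|y^*|^T\Rad{b}+(x^*_B)^T\Rad{c}_B\bigr)+\mna{O}(\alpha^2).
\end{align*}

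Next I would divide by $\alpha$ and pass to the limit: the linear term yields exactly the claimed expression $|y^*|^T\Rad{A}_Bx^*_B+|y^*|^T\Rad{b}+(x^*_B)^T\Rad{c}_B$, while the $\mna{O}(\alpha^2)$ term contributes $\mna{O}(\alpha)$, which vanishes as $\alpha\to0^+$. This shows the one-sided limit exists and equals the stated value, which by definition is $d_w$. I would also note, if desired, that the two equivalent forms in Lemma~\ref{lmmRohnWorst} show the full-dimensional expression $|y^*|^T\Rad{A}x^*+|y^*|^T\Rad{b}+x^*{}^T\Rad{c}$ coincides with the basis-restricted one, since $x^*_N=0$ for the nonbasic indices.

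There is essentially no obstacle here: the proposition is an immediate corollary of Lemma~\ref{lmmRohnWorst}, with the only content being the (trivial) observation that dividing an expansion of the form $\alpha\kappa+\mna{O}(\alpha^2)$ by $\alpha$ and letting $\alpha\to0^+$ returns $\kappa$. All the genuine difficulty — establishing the first-order perturbation formula and then optimizing over the interval data via the vertex characterization — is already absorbed into Lemmas~\ref{lmmRohn} and~\ref{lmmRohnWorst}. The proof is therefore a single short paragraph.
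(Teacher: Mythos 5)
Your proposal is correct and matches the paper's own proof exactly: both substitute the expansion from Lemma~\ref{lmmRohnWorst} into the difference quotient, note the remainder becomes $\mna{O}(\alpha)$, and take the limit as $\alpha\to0^+$. Nothing is missing.
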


\begin{proof}
By Lemma~\ref{lmmRohnWorst},
\begin{align*}
\frac{\onum{f}(\imace{A}_\alpha,\ivr{b}_\alpha,\ivr{c}_\alpha)-f(A,b,c)}{\alpha}
= |y^*|^T\Rad{A}_Bx^*_B + |y^*|^T\Rad{b}+(x^*_B)^T\Rad{c}_B+\mna{O}(\alpha).
\end{align*}
By taking the limit as $\alpha\to0^+$, we get the statement.
\end{proof}

There are two natural choices for the perturbation patterns $\Rad{A}$, $\Rad{b}$ and $\Rad{c}$. For relative perturbations, we take the absolute value of the nominal values, that is, $\Rad{A}=|A|$, $\Rad{b}=|b|$ and $\Rad{c}=|c|$. In this case, we have
\begin{align*}
d_w(A,b,c,\Rad{A},\Rad{b},\Rad{c})
= |y^*|^T|A_B|x_B + |y^*|^T|b|+ (x^*_B)^T|c|.
\end{align*}
For absolute  perturbations, all entries of $\Rad{A}$, $\Rad{b}$ and $\Rad{c}$ consist of ones, that is, $\Rad{A}=ee^T$, $\Rad{b}=e$ and $\Rad{c}=e$. In this case, we have
\begin{align*}
d_w(A,b,c,\Rad{A},\Rad{b},\Rad{c})
&= |y^*|^Tee^Tx_B + |y^*|^Te+ (x^*_B)^Te\\
&=(|y^*|^Te+1)((x^*_B)^Te+1)-1.
\end{align*}

\subsection{Degenerate case}\label{ssDegen}

For a degenerate optimal solution, the computation of $d_w$ can be more difficult. First, we present a variation of Lemma~\ref{lmmRohn}. In fact, Rohn \cite{Roh1989b} showed more: Equation \nref{eqRohnPertF} holds even for a possibly degenerate optimal solution $x^*$ provided $B$ is an optimal basis corresponding to $(A',b',c')$. This will help us to obtain upper bounds on $d_w(A,b,c)$ in the general case.

\begin{lemma}\label{lmmRohnAdap}
Let $x^*$ be a basic optimal solution to \nref{lpGen} corresponding to a basis~$B$, and let $y^*$ be the corresponding dual basic optimal solution. For every $A'\in\imace{A}_\alpha$, $b'\in\ivr{b}_\alpha$ and $c'\in\ivr{c}_\alpha$ having $B$ as an optimal basis,
\begin{align}\label{eqRohnPertFadap}
f(A',b',c')
=f(A,b,c) + y^*{}^T(A-A')x^*
  + y^*{}^T(b'-b)+ x^*{}^T(c'-c)
  +\mna{O}(\alpha^2).
\end{align}
\end{lemma}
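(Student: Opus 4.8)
The plan is to reduce Lemma~\ref{lmmRohnAdap} to the nondegenerate case already handled in Lemma~\ref{lmmRohn} by exploiting the fact that the expansion \nref{eqRohnPertF} only ever used the optimality of $B$ as a basis, not the uniqueness or nondegeneracy of $x^*$. More precisely, I would first recall the standard closed-form description of a basic solution: if $B$ is an optimal basis for the (perturbed) data $(A',b',c')$, then the primal basic solution is $x'_B=(A'_B)^{-1}b'$, $x'_N=0$, the dual solution is $y'=(A'_B)^{-T}c'_B$, and the optimal value is $f(A',b',c')=(c'_B)^T(A'_B)^{-1}b' = (y')^Tb'$. The analogous formulas hold at $\alpha=0$ for $(A,b,c)$ with the same $B$, giving $f(A,b,c)=(y^*)^Tb = (c_B)^T(A_B)^{-1}b$ and $x^*_B=(A_B)^{-1}b$.

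Next I would carry out the perturbation analysis of the expression $(c'_B)^T(A'_B)^{-1}b'$ around the nominal data. Writing $A'_B=A_B+\Delta A_B$, $b'=b+\Delta b$, $c'_B=c_B+\Delta c_B$, with all increments of size $\mathcal{O}(\alpha)$ since $A'\in\imace{A}_\alpha$ etc., I would use the Neumann-type expansion $(A_B+\Delta A_B)^{-1}=(A_B)^{-1}-(A_B)^{-1}\Delta A_B(A_B)^{-1}+\mathcal{O}(\alpha^2)$ (valid for $\alpha$ small enough that $A'_B$ stays nonsingular, which is guaranteed since $B$ is assumed to be a basis, hence $A'_B$ invertible). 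Multiplying out $(c_B+\Delta c_B)^T\big((A_B)^{-1}-(A_B)^{-1}\Delta A_B(A_B)^{-1}\big)(b+\Delta b)$ and collecting the zeroth- and first-order terms, the zeroth order gives $f(A,b,c)$, and the first-order terms are $\Delta c_B^T(A_B)^{-1}b + c_B^T(A_B)^{-1}\Delta b - c_B^T(A_B)^{-1}\Delta A_B(A_B)^{-1}b$. Substituting $x^*_B=(A_B)^{-1}b$ and $y^*=(A_B)^{-T}c_B$ turns these into $\Delta c_B^Tx^*_B + (y^*)^T\Delta b - (y^*)^T\Delta A_B x^*_B$. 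Finally, since $x^*_N=0$ the term $(y^*)^T(A-A')x^*$ over all columns equals $-(y^*)^T\Delta A_B x^*_B$, and similarly $x^*{}^T(c'-c)=\Delta c_B^Tx^*_B$ because the nonbasic entries of $x^*$ vanish; this matches \nref{eqRohnPertFadap} exactly, with the quadratic remainder absorbed into $\mathcal{O}(\alpha^2)$.

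The main subtlety — rather than a genuine obstacle — is making precise that the $\mathcal{O}(\alpha^2)$ remainder is uniform over all admissible $(A',b',c')$ with $B$ optimal: one must note that the set of such perturbed data lies inside the compact box $\imace{A}_\alpha\times\ivr{b}_\alpha\times\ivr{c}_\alpha$, on which $\|\Delta A_B\|,\|\Delta b\|,\|\Delta c_B\|\le\alpha\cdot\mathrm{const}$, and on which (for $\alpha$ below a fixed threshold depending only on $A_B$) the Neumann series converges with a uniformly bounded tail. No hypothesis about which specific realizations actually have $B$ optimal is needed — the identity is a purely algebraic consequence of $B$ being a common optimal basis, so in fact one could even drop the word "optimal" and just require that $B$ be a basis for which the corresponding primal and dual basic solutions happen to be the optimal ones at both the nominal and perturbed data. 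I would close by remarking that this is exactly the content Rohn established in \cite{Roh1989b}, and that the point of stating it separately here is that it no longer presupposes nondegeneracy of $x^*$, which is what makes it usable for the upper-bound arguments on $d_w$ in the degenerate case.
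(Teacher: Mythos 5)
Your proof is correct, but it takes a different (and more self-contained) route than the paper, which offers no proof of this lemma at all: the paper simply observes that Rohn \cite{Roh1989b} established the expansion \eqref{eqRohnPertF} even for a degenerate $x^*$, provided $B$ is an optimal basis for the perturbed data, and restates that fact as Lemma~\ref{lmmRohnAdap}. Your argument supplies the elementary derivation behind the citation: since $B$ certifies optimality at both the nominal and the perturbed data, the two optimal values are given by the basis formulas $c_B^TA_B^{-1}b$ and $(c'_B)^T(A'_B)^{-1}b'$, and a first-order Neumann expansion of $(A'_B)^{-1}$ combined with the substitutions $x^*_B=A_B^{-1}b$, $y^*=A_B^{-T}c_B$ and $x^*_N=0$ reproduces \eqref{eqRohnPertFadap} term by term, with a remainder that is uniformly $\mathcal{O}(\alpha^2)$ over the perturbation box because $\|\Delta A_B\|$, $\|\Delta b\|$, $\|\Delta c_B\|$ are all $\mathcal{O}(\alpha)$ and the Neumann tail is uniformly bounded once $\alpha$ is below a threshold depending only on $A_B$. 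You also correctly isolate why the lemma survives degeneracy --- neither uniqueness nor nondegeneracy of $x^*$ is ever used, only the existence of a common optimal basis --- which is exactly what makes it usable in Proposition~\ref{propDwUpperBound}. One small remark: your suggestion that one could drop the word \emph{optimal} and merely require the basic solutions of $B$ to be optimal is not actually a weakening, since a basis whose primal and dual basic solutions are both feasible (hence optimal) is by definition an optimal basis; the two formulations coincide.
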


We will assume some kind of regularity in order that a small perturbation does not make it primary or dually infeasible: 
\begin{itemize}
\item
Matrix $A$ has linearly independent rows and there is a primal feasible $x^0$ such that $x^0>0$.
\item
The dual feasible set has nonempty interior.
\end{itemize}
These assumptions guarantee that for a small enough perturbation an optimal solution exists.

\begin{proposition}\label{propDwUpperBound}
We have
\begin{align*}
d_w \leq \max_{B\in\mna{B}}\ d_w(B),
\end{align*}
where
\begin{align*}
d_w(B) = |y^*(B)|^T\Rad{A}x^*(B) + |y^*(B)|^T\Rad{b} + x^*(B)^T\Rad{c},
\end{align*}
$\mna{B}$ is the set of all optimal bases, $x^*(B)$ is the optimal solution and $y^*(B)$ the dual optimal solution corresponding to basis~$B$.
\end{proposition}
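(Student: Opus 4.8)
The plan is to bound the worst-case optimal value $\onum{f}(\imace{A}_\alpha,\ivr{b}_\alpha,\ivr{c}_\alpha)$ from above by a maximum over optimal bases and then differentiate. First I would fix $\alpha>0$ small enough that, by the regularity assumptions, every realization $(A',b',c')\in(\imace{A}_\alpha,\ivr{b}_\alpha,\ivr{c}_\alpha)$ has an optimal solution. Let $(A',b',c')$ be a realization attaining (or approaching) the maximum in the definition of $\onum{f}$, and let $B'$ be an optimal basis for it. The key observation is that for $\alpha$ small the optimal basis of any realization must already be an optimal basis of the nominal problem \nref{lpGen}, i.e. $B'\in\mna{B}$; this is a standard continuity/stability fact for LP bases and follows from the regularity assumptions (finitely many candidate bases, and a basis that is not nominally optimal stays strictly suboptimal or infeasible under small perturbations).

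Given that $B'\in\mna{B}$, I would apply Lemma~\ref{lmmRohnAdap} with the basis $B'$: since $B'$ is an optimal basis for the realization $(A',b',c')$, equation \nref{eqRohnPertFadap} gives
\begin{align*}
f(A',b',c') = f(A,b,c) + y^*(B')^T(A-A')x^*(B') + y^*(B')^T(b'-b) + x^*(B')^T(c'-c) + \mna{O}(\alpha^2).
\end{align*}
Bounding each linear term by its absolute value and using $|A-A'|\leq\alpha\Rad{A}$, $|b'-b|\leq\alpha\Rad{b}$, $|c'-c|\leq\alpha\Rad{c}$ entrywise, together with $x^*(B')\geq0$ (so $x^*(B') = |x^*(B')|$), yields
\begin{align*}
f(A',b',c') \leq f(A,b,c) + \alpha\bigl(|y^*(B')|^T\Rad{A}x^*(B') + |y^*(B')|^T\Rad{b} + x^*(B')^T\Rad{c}\bigr) + \mna{O}(\alpha^2),
\end{align*}
that is, $f(A',b',c') \leq f(A,b,c) + \alpha\, d_w(B') + \mna{O}(\alpha^2) \leq f(A,b,c) + \alpha \max_{B\in\mna{B}} d_w(B) + \mna{O}(\alpha^2)$. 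Taking the supremum over realizations gives $\onum{f}(\imace{A}_\alpha,\ivr{b}_\alpha,\ivr{c}_\alpha) \leq f(A,b,c) + \alpha\max_{B\in\mna{B}} d_w(B) + \mna{O}(\alpha^2)$. Subtracting $f(A,b,c)$, dividing by $\alpha$, and letting $\alpha\to0^+$ yields $d_w\leq\max_{B\in\mna{B}} d_w(B)$, provided the derivative exists; if one only wants the inequality for the $\limsup$, existence is not even needed.

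The main obstacle I expect is making rigorous the claim that the optimal basis of every small perturbation lies in $\mna{B}$, and controlling the $\mna{O}(\alpha^2)$ term uniformly over all realizations and all bases $B\in\mna{B}$ simultaneously — since $\mna{B}$ is finite this uniformity is not serious, but one must be careful that the "sufficiently small $\alpha$" in Lemma~\ref{lmmRohnAdap} can be chosen to work for all bases in $\mna{B}$ at once. A secondary subtlety is whether the $\limsup$ passage is what we need or whether one also wants a matching lower bound; here only the upper bound is claimed, so I would not pursue the reverse inequality (indeed, the whole point of the later sections is that equality can fail in the degenerate case).
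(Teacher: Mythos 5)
Your proposal is correct and follows essentially the same route as the paper: apply Lemma~\ref{lmmRohnAdap} to an optimal basis of the perturbed realization (which, for small $\alpha$, must belong to $\mna{B}$), bound the linear terms by absolute values, maximize over the finitely many optimal bases, and pass to the limit. The subtleties you flag — that the perturbed optimal basis lies in $\mna{B}$, and uniformity of the $\mna{O}(\alpha^2)$ terms over the finite set $\mna{B}$ — are exactly the points the paper also relies on (and treats no more explicitly than you do).
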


\begin{proof}
Let $\alpha>0$ be small enough. By Lemma~\ref{lmmRohnAdap}, for every $A'\in\imace{A}_\alpha$, $b'\in\ivr{b}_\alpha$ and $c'\in\ivr{c}_\alpha$ there is a basis $B\in\mna{B}$ that is optimal for both $(A,b,c)$ and $(A',b',c')$, and 
\begin{align*}
f(A',b',c')
&=f(A,b,c) + y^*(B)^T(A-A')x^*(B)
  + y^*(B)^T(b'-b)\\
&\quad + x^*(B)^T(c'-c) + \mna{O}(\alpha^2).
\end{align*}
Then we have
\begin{align*}
f(A',b',c')
\leq f(A,b,c) + \alpha |y^*(B)|^T\Rad{A}x^*(B) + \alpha |y^*(B)|^T\Rad{b}
      + \alpha x^*(B)^T\Rad{c} + \mna{O}(\alpha^2).
\end{align*}
Thus
\begin{align*}
\onum{f}(\imace{A}_\alpha,\ivr{b}_\alpha,\ivr{c}_\alpha)
\leq \max_{B\in\mna{B}}\big\{ 
& f(A,b,c) + \alpha |y^*(B)|^T\Rad{A}x^*(B)
  + \alpha |y^*(B)|^T\Rad{b}\\
&  + \alpha x^*(B)^T\Rad{c} + \mna{O}(\alpha^2)\big\},
\end{align*}
and the rest is analogous to Proposition~\ref{propNondeg}.
\end{proof}

\begin{proposition}\label{propDwAttain}
We have 
$
d_w =  d_w(B)
$ 
for certain $B\in\mna{B}$.
\end{proposition}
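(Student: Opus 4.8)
The plan is to show that the upper bound from Proposition~\ref{propDwUpperBound} is actually attained, by exhibiting a one-parameter family of realizations that, for all sufficiently small $\alpha>0$, both keep some fixed optimal basis $B$ and realize the worst case optimal value $\onum{f}(\imace{A}_\alpha,\ivr{b}_\alpha,\ivr{c}_\alpha)$ up to $\mna{O}(\alpha^2)$. Fix a basis $B\in\mna{B}$ achieving the maximum in Proposition~\ref{propDwUpperBound}, together with its primal and dual basic optimal solutions $x^*(B)$ and $y^*(B)$. Guided by Lemma~\ref{lmmRohnAdap}, the natural candidate is to perturb in the directions that make each term of $y^*(B)^T(A-A')x^*(B)+y^*(B)^T(b'-b)+x^*(B)^T(c'-c)$ as large as possible: take $A'=A-\alpha\,\diag(\sgn(y^*(B)))\,\Rad{A}$ on the appropriate sign pattern so that $y^*(B)^T(A-A')x^*(B)=\alpha\,|y^*(B)|^T\Rad{A}\,x^*(B)$ (here one uses $x^*(B)\geq 0$), $b'=b+\alpha\,\diag(\sgn(y^*(B)))\,\Rad{b}$ so that $y^*(B)^T(b'-b)=\alpha\,|y^*(B)|^T\Rad{b}$, and $c'=c+\alpha\Rad{c}$ so that $x^*(B)^T(c'-c)=\alpha\,x^*(B)^T\Rad{c}$. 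With this choice, $f(A',b',c')=f(A,b,c)+\alpha\,d_w(B)+\mna{O}(\alpha^2)$.

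The key remaining point is that this particular realization $(A',b',c')$ does keep $B$ as an optimal basis for small $\alpha$, so that Lemma~\ref{lmmRohnAdap} genuinely applies and $f(A',b',c')$ is a legitimate lower estimate for $\onum{f}(\imace{A}_\alpha,\ivr{b}_\alpha,\ivr{c}_\alpha)$. This is where the two regularity assumptions preceding Proposition~\ref{propDwUpperBound} are used: $A$ has linearly independent rows, there is a strictly positive primal feasible point, and the dual feasible set has nonempty interior. I would argue that optimality of $B$ for the perturbed data is equivalent to feasibility of the perturbed primal basic solution $A'_B{}^{-1}b'\geq0$ together with dual feasibility (nonnegativity of the reduced costs $c'-A'{}^T y'(B)$ on the nonbasic columns), and that both conditions are stable: the nondegeneracy-type regularity ensures the relevant inequalities hold with some strict slack at $\alpha=0$ either for free or after a harmless further perturbation within the boxes, hence remain valid for all $\alpha$ below some threshold. (If one wishes to avoid assuming primal nondegeneracy of $x^*(B)$, one perturbs slightly inside $\imace{b}_\alpha$ toward the strictly positive feasible direction supplied by $x^0$; the effect on the objective is $\mna{O}(\alpha)$ in a controllable direction and can be absorbed or, more cleanly, one simply notes that among optimal bases the maximizing $B$ may be taken with its basic solution in the relative interior of the optimal face.)

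Putting the pieces together: for small $\alpha>0$ we get
\begin{align*}
\onum{f}(\imace{A}_\alpha,\ivr{b}_\alpha,\ivr{c}_\alpha)
\geq f(A',b',c')
= f(A,b,c)+\alpha\, d_w(B)+\mna{O}(\alpha^2),
\end{align*}
so dividing by $\alpha$ and letting $\alpha\to0^+$ yields $d_w\geq d_w(B)$. Combined with Proposition~\ref{propDwUpperBound}, which gives $d_w\leq\max_{B'\in\mna{B}}d_w(B')=d_w(B)$ by the choice of $B$, we conclude $d_w=d_w(B)$, proving the statement. (Implicit here is that the limit defining $d_w$ exists; this follows because the function $\alpha\mapsto\onum{f}(\imace{A}_\alpha,\ivr{b}_\alpha,\ivr{c}_\alpha)$ is the pointwise maximum over the finitely many optimal bases of functions that are affine in $\alpha$ up to $\mna{O}(\alpha^2)$, hence has a well-defined right derivative at $0$.)

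The main obstacle I expect is the stability argument in the second paragraph: verifying rigorously that the extremal realization $(A',b',c')$ retains $B$ as an optimal basis, and in particular handling the case where $x^*(B)$ is primal degenerate, where naive perturbation of $b$ could destroy primal feasibility of the basic solution. Resolving this cleanly is exactly the delicate point, and it is presumably why the authors flagged the degenerate case as subtle; the cleanest fix is to choose $B$ among the maximizers of $d_w(\cdot)$ so that its basic solution lies in the relative interior of the primal optimal face (possible since the maximum of the continuous function $d_w(\cdot)$ over the finite set $\mna{B}$ is attained and one can break ties favorably), which removes the degeneracy difficulty for the primal side, the dual side being covered by the assumed nonempty interior of the dual feasible set.
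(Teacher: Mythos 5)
Your proposal has a genuine gap, and in fact it sets out to prove a statement that is strictly stronger than Proposition~\ref{propDwAttain} and is false. By fixing $B$ as a \emph{maximizer} of $d_w(\cdot)$ over $\mna{B}$ and arguing $d_w\geq d_w(B)$, you would conclude $d_w=\max_{B'\in\mna{B}}d_w(B')$, i.e., that the upper bound of Proposition~\ref{propDwUpperBound} is always tight. The paper's Example~\ref{exNestab} with $c_3=2$ refutes this: there $d_w(B_1)=479$ and $d_w(B_2)=77/3$, yet the exact value is $d_w=77/3=d_w(B_2)$, strictly below the maximum. The failure occurs exactly at the step you flag as ``the key remaining point'': the extremal realization $(A',b',c')$ built from $\sgn(y^*(B))$ for the maximizing basis $B$ need \emph{not} retain $B$ as an optimal basis for any $\alpha>0$, so Lemma~\ref{lmmRohnAdap} does not apply to it and $f(A',b',c')$ can grow only at the slower rate $d_w(B')$ of whichever basis $B'$ actually becomes optimal. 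The regularity assumptions (strictly positive primal feasible point, dual interior) only guarantee that \emph{some} optimal basis exists after perturbation; they say nothing about persistence of a chosen basis, and neither does moving the basic solution into the relative interior of the optimal face --- in Example~\ref{exNestab} both bases share the same dual solution, hence the same extremal realization, and that realization simply prefers $B_2$.

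The paper's proof runs in the opposite direction: it does not select $B$ by maximizing $d_w(\cdot)$, but identifies the basis at which the worst case is \emph{actually attained}. Concretely, by the reduction \nref{thmOvrTypaOnumf} the worst case is realized along a ray $(A-\alpha\diag(s)\Rad{A},\,b+\alpha\diag(s)\Rad{b},\,c+\alpha\Rad{c})$ for one of finitely many sign vectors $s$; for each fixed pair $(B,s)$ the set of $\alpha\geq0$ for which $B$ is optimal on that ray is semialgebraic, hence a finite union of intervals, so for all sufficiently small $\alpha>0$ a single pair $(B,s)$ with $s=\sgn(y^*(B))$ governs $\onum{f}(\imace{A}_\alpha,\ivr{b}_\alpha,\ivr{c}_\alpha)$. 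Only then is Lemma~\ref{lmmRohnAdap} applied, to \emph{that} basis, yielding $d_w=d_w(B)$. If you want to salvage your lower-bound strategy, you would have to prove persistence of optimality of the chosen basis along its own extremal ray, which is precisely what can fail; the existential, combinatorial selection of $B$ is not an optional refinement but the substance of the proof.
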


\begin{proof}
First, by \nref{thmOvrTypaOnumf}, the value $\onum{f}(\imace{A}_\alpha,\ivr{b}_\alpha,\ivr{c}_\alpha)$ is attained at a realization $(A-\alpha\diag(s)\Rad{A},b+\alpha\diag(s)\Rad{b},c+\alpha\Rad{c})$ and certain $s\in\{\pm1\}^m$. Next, recall that a basis $B$ is optimal if and only if $A_B$ is nonsingular, $A_B^{-1}b\geq0$ and $c_N^T-c_B^TA_B^{-1}A_N^{}\geq0^T$. Now, fix $B\in\mna{B}$ and $s\in\{\pm1\}^m$. Then the set of all values of $\alpha\geq0$, for which $B$ is an optimal basis for the scenario $(A-\alpha\diag(s)\Rad{A},b+\alpha\diag(s)\Rad{b},c+\alpha\Rad{c})$ is characterized by polynomial inequalities and is therefore formed by a union of finitely many intervals. Hence also the set of all values of $\alpha\geq0$, for which $B$ is an optimal basis for which $\onum{f}(\imace{A}_\alpha,\ivr{b}_\alpha,\ivr{c}_\alpha)$ is attained, is a union of finitely many intervals. This means that there is a sufficiently small $\alpha^0>0$ such that for all $\alpha\in[0,\alpha^0]$, the worst case optimal value $\onum{f}(\imace{A}_\alpha,\ivr{b}_\alpha,\ivr{c}_\alpha)$ is attained for basis $B$ and $s\in\{\pm1\}^m$. Vector $s$ is the sign vector of a corresponding dual basic optimal solution $y^*(B)$ \cite{Mra1998}, that is, $s=\sgn(y^*(B))$.
Now, by Lemma~\ref{lmmRohnAdap}
\begin{align*}
\onum{f}(\imace{A}_\alpha,\ivr{b}_\alpha,\ivr{c}_\alpha)
&=f\big(A-\alpha\diag(s)\Rad{A},b+\alpha\diag(s)\Rad{b},c+\alpha\Rad{c}\big)\\
&= f(A,b,c) + \alpha y^*(B)^T\diag(s)\Rad{A}x^*(B) + \alpha y^*(B)^T\diag(s)\Rad{b}\\
&\quad  + \alpha  x^*(B)^T\Rad{c} + \mna{O}(\alpha^2).
\end{align*}
We have $y^*(B)^T\diag(s)\Rad{A}x^*(B)=|y^*(B)|^T\Rad{A}x^*(B)$ and similarly for the other terms. Thus
\begin{align*}
\onum{f}(\imace{A}_\alpha,\ivr{b}_\alpha,\ivr{c}_\alpha)
&= f(A,b,c) + \alpha |y^*(B)|^T\Rad{A}x^*(B) + \alpha |y^*(B)|^T\Rad{b}\\
&\quad  + \alpha x^*(B)^T\Rad{c} + \mna{O}(\alpha^2),
\end{align*}
from which $d_w =  d_w(B)$.
\end{proof}

The drawback of this formula is that we have to calculate all optimal bases, the number of which can possibly be high (exponential w.r.t.\ the dimension).
Moreover, it is an open problem how to check that $d_w =  d_w(B)$ for a given basis~$B$. There is no characterization known and also its computational complexity.

\subsection{Computational complexity}\label{ssCompl}

We show that it is intractable to compute not only $d_w$, but also its upper bound.

\begin{proposition}\label{propDwNP}
It is NP-hard to check if $d_w\geq1$.
\end{proposition}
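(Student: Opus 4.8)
The plan is to reduce a known NP-hard problem about interval linear algebra or interval LP to the question of deciding whether $d_w \geq 1$. The natural candidate is the NP-hardness of computing (or even approximating) the worst-case optimal value $\onum{f}$ of an interval LP, which is cited in the introduction via \cite{GabMur2008,GabMur2010,Roh1997}. The key observation is that $d_w$ is, up to first order, exactly $(\onum{f}(\imace{A}_\alpha,\ivr{b}_\alpha,\ivr{c}_\alpha) - f(A,b,c))/\alpha$. So if I can arrange a family of LP problems in which the nominal data $(A,b,c)$ is a vertex of the "interval box" and the full interval problem is recovered at $\alpha = 1$ (by taking the perturbation radii equal to the radii of the target interval problem), and in which the worst-case optimal value function $\alpha \mapsto \onum{f}(\imace{A}_\alpha,\ivr{b}_\alpha,\ivr{c}_\alpha)$ is piecewise linear and \emph{linear} on $[0,1]$ for the instances in question, then $d_w = \onum{f}(\imace{A}_1,\ivr{b}_1,\ivr{c}_1) - f(A,b,c)$ and deciding $d_w \geq 1$ becomes deciding $\onum{f} \geq f(A,b,c) + 1$, which is the hard problem.

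The main work will therefore be to choose the reduction so that linearity of $\alpha\mapsto\onum{f}$ on $[0,1]$ is guaranteed. First I would look at the standard NP-hardness construction for $\onum{f}$ (it typically encodes a problem like subset-sum, partition, or a norm-maximization), and check whether in that construction the optimal basis is stable for the whole family $\alpha \in [0,1]$ — if yes, formula \nref{thmOvrTypaOnumf} together with Lemma~\ref{lmmRohnAdap} gives exact linearity and we are done. If the optimal basis changes, I would instead use a slightly more direct route: take an interval LP that is known to be hard and embed it so that the nominal problem sits at $\alpha = 0$ with a \emph{non}degenerate or controlled-degenerate optimum, rescale the perturbation radii by a large constant $M$ so that the "hard" separation of $1$ in the original instance becomes a separation well within the first-order (linear) regime — i.e., choose $M$ so that the $\mna{O}(\alpha^2)$ remainder is dominated. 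An even cleaner variant: since we only need NP-hardness of deciding $d_w \geq 1$, I can use the fact from Proposition~\ref{propDwAttain} that $d_w = d_w(B)$ for some optimal basis $B$, and encode the combinatorial problem in the choice of $B$ and of the sign vector $s = \sgn(y^*(B))$.

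Concretely, I would try the following self-contained construction: start from an instance of a known NP-complete problem (say, deciding whether $\max\{s^T Q s : s \in \{\pm1\}^m\} \geq t$ for a suitable matrix $Q$ — this is the kind of quadratic form that appears when expanding $|y^*|^T\Rad{A}x^*$), and build an LP in form \nref{lpGen} whose unique optimal basis $B$ has $y^*(B)$ and $x^*(B)$ with prescribed supports so that $d_w(B) = |y^*(B)|^T\Rad{A}x^*(B) + \ldots$ literally equals $\max_s s^T Q s$ scaled to cross the threshold $1$ exactly when the answer to the source problem is "yes." I would then need to argue that this $B$ is indeed \emph{the} optimal basis (no competing bases and no degeneracy issues), using the regularity assumptions stated before Proposition~\ref{propDwUpperBound}, so that Proposition~\ref{propNondeg} applies and gives $d_w = d_w(B)$ exactly.

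The hard part will be controlling the second-order and basis-change effects: formula \nref{thmOvrTypaOnumf} shows $\onum{f}$ over the full box (at $\alpha=1$) can genuinely differ from its linearization at $0$, so a naive reduction "$d_w \geq 1 \iff \onum{f}(\imace{A},\ivr{b},\ivr{c}) \geq f + 1$" is false in general. The fix is to force the relevant basis/sign structure to be constant on a neighborhood of $0$, which is exactly what the nondegeneracy hypothesis of Proposition~\ref{propNondeg} buys — so the real design constraint is to make the source-problem-dependent quantity appear already in the \emph{first-order} term $|y^*|^T\Rad{A}x^* + |y^*|^T\Rad{b} + (x^*_B)^T\Rad{c}$ of a nondegenerate LP, rather than in the curvature. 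If a fully nondegenerate encoding proves awkward, the fallback is to tolerate degeneracy and instead prove hardness of the \emph{upper bound} $\max_{B \in \mna{B}} d_w(B)$ first (maximizing $d_w(B)$ over exponentially many bases is a natural combinatorial optimization), and then show the nominal instance can be perturbed infinitesimally so that this max is attained, matching the claim in the proposition's statement that hardness holds "not only for $d_w$, but also for its upper bound."
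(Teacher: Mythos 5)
Your plan has a self-defeating core: you propose to engineer a \emph{nondegenerate} instance with a unique optimal basis $B$ so that Proposition~\ref{propNondeg} applies and $d_w = d_w(B)$ exactly. But Proposition~\ref{propNondeg} is precisely the statement that in the nondegenerate case $d_w$ is given by a closed-form expression in $x^*$ and $y^*$, both of which are computable in polynomial time; any reduction whose target instances are nondegenerate therefore lands in a polynomially decidable special case and cannot establish NP-hardness. The same objection kills the idea of encoding $\max_s s^TQs$ into the term $|y^*|^T\Rad{A}x^*$ for a prescribed unique basis: once $y^*$ and $x^*$ are fixed vectors, that term is a single number evaluable in polynomial time, and no combinatorial maximization over $s$ survives. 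The hardness has to live in the degeneracy --- in the need to maximize over an optimal face (equivalently, over exponentially many optimal bases or sign vectors of dual optima) --- which is exactly the regime your main construction is designed to avoid. Your fallback (``tolerate degeneracy and prove hardness of the upper bound first'') points in the right direction but is left entirely unexecuted.

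For comparison, the paper's proof is short and concrete: starting from the NP-complete problem of deciding solvability of $-e\leq My\leq e$, $e^T|y|\geq 1$ over nonnegative positive definite rational $M$, it sets $A=(M\mid -M)$, $b=0$, $c=e$, and perturbs \emph{only} the right-hand side ($\Rad{b}=e$, $\Rad{A}=0$, $\Rad{c}=0$). Then $f(A,b,c)=0$, the entire dual feasible polytope $\{y:-e\leq My\leq e\}$ is optimal (maximal degeneracy), and because $b=0$ the problem is homogeneous in $\alpha$, giving $\onum{f}(\imace{A}_\alpha,\ivr{b}_\alpha,\ivr{c}_\alpha)=\alpha\cdot\max\{e^T|y|:-e\leq My\leq e\}$ \emph{exactly}, with no $\mna{O}(\alpha^2)$ remainder and no basis-stability argument. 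Hence $d_w$ equals that maximum and $d_w\geq 1$ iff the source system is solvable. Note that the linearity-in-$\alpha$ you correctly identify as the crux is obtained by homogeneity of the construction, not by stabilizing an optimal basis over $[0,1]$ --- the latter would again tend to trivialize the instance. Your proposal as written does not close this gap.
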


\begin{proof}
By \cite[Thm.~2.3]{Fie2006}, it is NP-complete to check solvability of 
\begin{align}\label{ineqEqLemmaNP}
-e\leq My\leq e,\ e^T|y|\geq1
\end{align}
in the set of non-negative positive definite rational matrices~$M$. We will construct a reduction to this problem as follows. Put $A=(M\mid -M)$, $b=0$, $c=e$, $\Rad{A}=0$, $\Rad{b}=e$ and $\Rad{c}=0$. Thus the LP problem reads
$$
\min\ e^Tx^1+e^Tx^2 \st Mx^1-Mx^2=0,\ x^1,x^2\geq0,
$$
and its dual takes the form
\begin{align}\label{lpPropDwNPdual}
\max\ 0^Ty \st My\leq e,\ -My\leq e.
\end{align}
The dual feasible set is nonempty and bounded. Moreover, each dual feasible solution is optimal and $f(A,b,c)=0$. By \cite{Hla2012a}, we have
\begin{align*}
\onum{f}(\imace{A}_\alpha,\ivr{b}_\alpha,\ivr{c}_\alpha)
=\alpha f^*,
\end{align*}
where
\begin{align*}
f^* = \max\ e^T|y| \st -e\leq My\leq e.
\end{align*}
Hence $d_w=f^*$, and we have $d_w\geq1$ if and only if the system \nref{ineqEqLemmaNP} is feasible.
\end{proof}

\begin{proposition}
It is NP-hard to check $\max_{B\in\mna{B}}\, d_w(B)\geq1$.
\end{proposition}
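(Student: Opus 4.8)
The plan is to reuse the same reduction as in Proposition~\ref{propDwNP}, and argue that for that particular instance the upper bound $\max_{B\in\mna{B}}\,d_w(B)$ actually coincides with $d_w$, so that the NP-hardness of deciding $d_w\ge1$ transfers verbatim. Recall that in that construction $\Rad{A}=0$, $\Rad{c}=0$, $\Rad{b}=e$, $b=0$, $c=e$, and $A=(M\mid -M)$; every dual feasible solution is optimal, $f(A,b,c)=0$, and $d_w=f^*=\max\{e^T|y|\mmid -e\le My\le e\}$. So the first step is to restate this instance and observe that for any optimal basis $B$ the quantity $d_w(B)$ simplifies to $|y^*(B)|^Te = e^T|y^*(B)|$, because the $\Rad{A}$ and $\Rad{c}$ terms vanish.

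Next I would show $\max_{B\in\mna{B}}\,d_w(B)=d_w$ for this instance. The inequality $\ge$ is Proposition~\ref{propDwAttain} (there is a basis $B$ with $d_w=d_w(B)\le\max_{B}d_w(B)$); the reverse inequality is Proposition~\ref{propDwUpperBound}. Hence it suffices to invoke those two propositions to get equality, so that $\max_{B\in\mna{B}}\,d_w(B)=f^*$ exactly as in Proposition~\ref{propDwNP}, and therefore $\max_{B\in\mna{B}}\,d_w(B)\ge1$ if and only if the system~\nref{ineqEqLemmaNP} is feasible. This gives the reduction and the NP-hardness.

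The one point that needs care --- and the main obstacle --- is whether the regularity assumptions preceding Proposition~\ref{propDwUpperBound} (matrix $A$ with linearly independent rows, a strictly positive primal feasible point, dual feasible set with nonempty interior) are satisfied by the instance $A=(M\mid -M)$, $b=0$. The rows of $(M\mid -M)$ are independent since $M$ is positive definite, hence nonsingular; $x=0$ is primal feasible but not strictly positive, and the dual feasible set $\{y\mmid -e\le My\le e\}$ has $0$ in its interior. So the primal strict feasibility assumption fails, and I would need either to note that Proposition~\ref{propDwUpperBound} still applies here directly (the bound in its proof only used Lemma~\ref{lmmRohnAdap} plus existence of optimal bases for small perturbations, which holds because perturbing only $b$ keeps $\onum{f}$ finite by~\nref{thmOvrTypaOnumf} and formula~\nref{thmOvrTypaOnumf} was already used in Proposition~\ref{propDwNP} to compute $\onum f$ exactly), or to perturb the instance slightly --- e.g. replace $b=0$ by a tiny $b=\eps v$ with $v$ chosen so that the perturbed LP keeps the same asymptotic behaviour --- while preserving the equivalence with~\nref{ineqEqLemmaNP}. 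I expect the cleanest route is the former: since the proof of Proposition~\ref{propDwNP} already establishes $\onum{f}(\imace{A}_\alpha,\ivr{b}_\alpha,\ivr{c}_\alpha)=\alpha f^*$ by an independent computation, both $d_w$ and its basis-wise upper bound are pinned to $f^*$, and no extra regularity beyond what Proposition~\ref{propDwNP} already guarantees is needed.
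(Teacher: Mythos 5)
Your overall plan matches the paper's: reuse the reduction from Proposition~\ref{propDwNP} and argue that for that specific instance the basis-wise maximum coincides with $d_w$, so the hardness transfers. But the way you justify the key equality $\max_{B\in\mna{B}}d_w(B)=d_w$ contains a genuine error. You claim the inequality $\max_{B\in\mna{B}}d_w(B)\le d_w$ ``is Proposition~\ref{propDwUpperBound}.'' It is not: Proposition~\ref{propDwUpperBound} states $d_w\le\max_{B\in\mna{B}}d_w(B)$, which is the \emph{same} direction as what you already extract from Proposition~\ref{propDwAttain}, so your two citations together only give $d_w\le\max_{B\in\mna{B}}d_w(B)$ and nothing forces equality. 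Indeed, in general the upper bound is strict --- the paper's Example~\ref{exNestab} with $c_3=2$ has $d_w=77/3$ while $\max_B d_w(B)=479$ --- so no general proposition can supply the reverse inequality; it must come from the structure of the reduction instance.

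The missing step is short: in the constructed instance $\Rad{A}=0$, $\Rad{c}=0$, $\Rad{b}=e$, so $d_w(B)=e^T|y^*(B)|$ for every optimal basis $B$, and each $y^*(B)$ is feasible for the dual \nref{lpPropDwNPdual}; hence $d_w(B)\le f^*=\max\{e^T|y| \mmid -e\le My\le e\}=d_w$, giving $\max_{B\in\mna{B}}d_w(B)\le d_w$ and therefore equality. (Equivalently, as the paper argues in one line: every dual feasible basis is optimal, and the convex function $y\mapsto e^T|y|$ attains its maximum over the bounded dual polytope at a vertex, i.e.\ at some $y^*(B)$ with $B\in\mna{B}$.) With that repair, the equivalence with the feasibility of \nref{ineqEqLemmaNP} goes through exactly as you describe. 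Your concern about the regularity assumptions is then moot, since the corrected argument never invokes Proposition~\ref{propDwUpperBound} at all.
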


\begin{proof}
We proceed in the same way as in the proof of Proposition~\ref{propDwNP}. Since for the dual LP problem \nref{lpPropDwNPdual} each feasible basis is optimal, we get $d_w=\max_{B\in\mna{B}}\, d_w(B)$.
\end{proof}

There are, however, some cases when the upper bound is tractable.

\begin{remark}
Suppose that the dual optimal solution $y^*$ is unique. Then for every $B\in\mna{B}$ we have
\begin{align*}
d_w(B) = |y^*|^T\Rad{A}x^*(B) + |y^*|^T\Rad{b} + x^*(B)^T\Rad{c}.
\end{align*}
The function 
\begin{align*}
h(x) = |y^*|^T\Rad{A}x + |y^*|^T\Rad{b} + x^T\Rad{c}
\end{align*}
is linear in $x$, and on the set of optimal solutions it attains its maximum at some basic solution, so the maximum value is $\max_{B\in\mna{B}}\,d_w(B)$. Therefore $\max_{B\in\mna{B}}\,d_w(B)$ can be computed in polynomial time by solving the LP problem
\begin{align*}
\max\ h(x) \st Ax=b,\ x\geq0,\ A^Ty\leq c,\ c^Tx=b^Ty.
\end{align*}

Another tractable case is when $\Rad{A}=0$ and $\Rad{b}=0$. In this case, $d_w(B) = x^*(B)^T\Rad{c}$. Thus, $\max_{B\in\mna{B}}\,d_w(B)$ can again be computed efficiently max maximizing a linear function on the set of optimal solutions, that is,
\begin{align*}
\max\ x^T\Rad{c} \st Ax=b,\ x\geq0,\ A^Ty\leq c,\ c^Tx=b^Ty.
\end{align*}
\end{remark}

\subsection{Another approach}\label{ssAlpha}
In view of \nref{thmOvrTypaOnumf}, there is $s\in\{\pm1\}^m$ such that
\begin{align}\label{dwFixS}
d_w=\lim_{\alpha\to0^+} 
 \frac{f(A_\alpha,b_\alpha,c_\alpha)-f(A,b,c)}{\alpha},
\end{align}
where
\begin{align*}
A_\alpha=A-\alpha\diag(s)\Rad{A},\ \ 
b_\alpha=b+\alpha\diag(s)\Rad{b},\ \ 
c_\alpha=c+\alpha\Rad{c}.
\end{align*}
This does not provide us with an effective way of computation of $d_w$, however, it shows the structure and direction of the worst case perturbation. 

Further, it suggests a possible (yet exponential) method of calculation $d_w$. Let $s\in\{\pm1\}^m$ and let us focus on computation the right-hand side value of~\nref{dwFixS}. Let $B$ be an optimal basis corresponding to $(A,b,c)$. Since $A_\alpha$, $b_\alpha$ and $c_\alpha$ depend linearly on the parameter $\alpha$, the entries in the simplex table depend on $\alpha$ as a polynomial fraction: recall that the right-hand side vector is $(A_\alpha)_B^{-1}b_\alpha$, the objective row $(c_\alpha)_N^T-(c_\alpha)_B^T(A_\alpha)_B^{-1}(A_\alpha)_N^{}$ and the table itself $(A_\alpha)_B^{-1}(A_\alpha)_N^{}$. Considering the parameter $\alpha$ as an infinitesimal value, we can virtually perform the simplex table transformation. Using the standard pivot selection rules, we move to the neighboring optimal basis. We proceed until we arrive at a basis $B^s$ that is optimal for the infinitesimal~$\alpha$. In view of Proposition~\ref{propDwAttain}, $d_w$ is attained for this basis and a certain $s\in\{\pm1\}^m$, that is, $d_w =  \max_{s\in\{\pm1\}^m} d_w(B^s)$.

Regarding the simplex table, there is no need to perform the actual transformations of the table; we just need to find the next basis. Further, in order to find the pivot and the subsequent basis, we need to compare the table elements and check if they are positive, for $\alpha>0$ sufficiently small. This can be achieved by calculating the derivative, or a higher order derivative in case it is zero.

Since the simplex method is known to take exponentially many steps in the worst case, the direct implementation would not yield a provably polynomial method for the computation of $B^s$. Indeed, it is an open question what is the true complexity of computing $B^s$ for a fixed $s\in\{\pm1\}^m$.


\subsection{A normalized version}\label{ssNorm}
Since the worst case optimal value derivative is not invariant with respect to  scaling of $(\Rad{A},\Rad{b},\Rad{c})$, we introduce an alternative derivative, in the definition of which $(\Rad{A},\Rad{b},\Rad{c})$ is inherently normalized. 
It is basically the maximal directional derivative of the optimal value in the space of $A,b,c$. More concretely, we consider the function  $(A,b,c)\mapsto f(A,b,c)$ and $d_r(A,b,c)$ denotes the maximum directional derivative on an interval box $\mathfrak{B}$, taking into account the scaling factors $\Rad{A}$, $\Rad{b}$ and $\Rad{c}$. The box reads 
$\mathfrak{B}=(\imace{A}_\tau,\ivr{b}_\tau,\ivr{c}_\tau)$,
where $\tau=1/\|(\Rad{A},\Rad{b},\Rad{c})\|_F$. 
Formally, we define
\begin{align*}
&d_r(A,b,c,\Rad{A},\Rad{b},\Rad{c})\\
&=\max_{(A',b',c')\in \mathfrak{B}}\ 
\lim_{\alpha\to0^+} 
 \frac{\onum{f}(A+\alpha(A'-A),b+\alpha(b'-b),c+\alpha(c'-c))-f(A,b,c)}{\alpha}.
\end{align*}

\begin{proposition}
We have:
\begin{enumerate}[(1)]
\item
$\displaystyle
d_r(A,b,c,\Rad{A},\Rad{b},\Rad{c})
 = \frac{1}{\|(\Rad{A},\Rad{b},\Rad{c})\|_F} d_w(A,b,c,\Rad{A},\Rad{b},\Rad{c}),
$
\item
$
d_w(\beta {A},\beta {b},\beta {c},\gamma\Rad{A},\gamma\Rad{b},\gamma\Rad{c})
= \gamma \cdot d_w(A,b,c,\Rad{A},\Rad{b},\Rad{c}),\ \forall \beta,\gamma>0,
$
\item
$
d_r(\beta {A},\beta {b},\beta {c},\gamma\Rad{A},\gamma\Rad{b},\gamma\Rad{c})
= d_r(A,b,c,\Rad{A},\Rad{b},\Rad{c}),\ \forall \beta,\gamma>0.
$
\end{enumerate}
\end{proposition}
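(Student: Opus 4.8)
The plan is to prove the three identities in order, using part (1) as the main workhorse, since parts (2) and (3) will follow quickly once (1) is established and the homogeneity of $d_w$ is understood.

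First I would prove part (2), the homogeneity of $d_w$ in the scaling parameters. Scaling the nominal data $(A,b,c)\mapsto(\beta A,\beta b,\beta c)$ and the radii $(\Rad{A},\Rad{b},\Rad{c})\mapsto(\gamma\Rad{A},\gamma\Rad{b},\gamma\Rad{c})$ turns the interval family $(\imace{A}_\alpha,\ivr{b}_\alpha,\ivr{c}_\alpha)$ into the family with midpoint $\beta(A,b,c)$ and radius $\alpha\gamma(\Rad{A},\Rad{b},\Rad{c})$, which coincides with the original family at parameter value $\alpha\gamma/\beta$ after an overall scaling of all data by $\beta$. Since scaling all of $A,b,c$ by $\beta>0$ scales every optimal value $f(A,b,c)$ by $\beta$ (the feasible set is unchanged when $b$ and $A$ are both scaled; one checks $x$ feasible for $(\beta A,\beta b)$ iff feasible for $(A,b)$, and the objective scales by $\beta$), we get $\onum{f}$ for the scaled family at parameter $\alpha$ equals $\beta\cdot\onum{f}(\imace{A}_{\alpha\gamma/\beta},\ivr{b}_{\alpha\gamma/\beta},\ivr{c}_{\alpha\gamma/\beta})$. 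Forming the difference quotient in $\alpha$, dividing by $\alpha$, and letting $\alpha\to0^+$ produces the factor $\beta\cdot(\gamma/\beta)=\gamma$, which is exactly the claimed identity. Alternatively, and perhaps more cleanly, in the nondegenerate case one can read it directly off Proposition~\ref{propNondeg}: the optimal solutions of the scaled primal/dual problems are $x^*$ and $y^*$ unchanged (again because scaling $A,b$ jointly leaves the primal feasible region fixed, and scaling $A,c$ jointly leaves the dual feasible region fixed), so each of the three terms $|y^*|^T\Rad{A}_Bx^*_B$, $|y^*|^T\Rad{b}$, $(x^*_B)^T\Rad{c}_B$ simply picks up the single factor $\gamma$ from the radius; in the degenerate case the same reasoning applies termwise to $d_w(B)$ in Propositions~\ref{propDwUpperBound}/\ref{propDwAttain}.

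Next I would prove part (1). The key observation is that along a direction $(A'-A,b'-b,c'-c)$ with $(A',b',c')\in\mathfrak{B}$, the one-sided limit in the definition of $d_r$ is precisely a directional derivative of $\onum{f}$, and taking the maximum over $\mathfrak{B}$ is a constrained maximization of this directional derivative over the Frobenius ball of radius $\tau=1/\|(\Rad{A},\Rad{b},\Rad{c})\|_F$ scaled componentwise by the radii. One then argues that the worst perturbation direction is achieved at the "corner" realization dictated by formula \nref{thmOvrTypaOnumf}, i.e. $A'=A-\diag(s)\Rad{A}\tau$, $b'=b+\diag(s)\Rad{b}\tau$, $c'=c+\Rad{c}\tau$ for the optimal sign vector $s$, so that $d_r$ equals the directional derivative of $\onum{f}$ at this particular perturbation. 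Comparing this with the definition of $d_w$, where the radii are inflated by $\alpha$ rather than $\alpha\tau$, and using the positive homogeneity of the directional derivative in the step size, we obtain $d_r = \tau\cdot d_w$, which is the claim. The cleanest way to organize this is: by part (2) with $\beta=1$, $d_w(A,b,c,\gamma\Rad{A},\gamma\Rad{b},\gamma\Rad{c})=\gamma\,d_w(A,b,c,\Rad{A},\Rad{b},\Rad{c})$; then $d_r(A,b,c,\Rad{A},\Rad{b},\Rad{c})$ is by definition the derivative associated to the box $\mathfrak{B}$ whose radius is $\tau(\Rad{A},\Rad{b},\Rad{c})$, i.e. $d_r = d_w(A,b,c,\tau\Rad{A},\tau\Rad{b},\tau\Rad{c}) = \tau\,d_w(A,b,c,\Rad{A},\Rad{b},\Rad{c})$, and $\tau=1/\|(\Rad{A},\Rad{b},\Rad{c})\|_F$. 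The only real content here is verifying that the "$\max$ over $\mathfrak{B}$" in the definition of $d_r$ returns the same thing as the worst-case derivative $d_w$ evaluated with radii $\tau(\Rad{A},\Rad{b},\Rad{c})$; this requires noting that a maximization of a directional derivative of $\onum{f}$ over the box $\mathfrak{B}$ is the same as the limit defining $d_w$ for that box, which is essentially the definition of $\onum{f}$ as a supremum over realizations combined with interchanging two suprema (the one over realizations inside $\onum{f}$ and the one over the perturbation box defining $d_r$). I expect this interchange-of-suprema / identification step to be the main obstacle, mostly in making it rigorous rather than in any deep difficulty: one must check that the limit and the outer maximum can be swapped, which follows because $\mathfrak{B}$ is compact and, under the regularity assumptions, $\onum{f}(\imace{A}_\alpha,\ivr{b}_\alpha,\ivr{c}_\alpha)$ is (right-)differentiable at $\alpha=0$, or equivalently from the explicit attainment statement Proposition~\ref{propDwAttain}.

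Finally, part (3) is immediate from (1) and (2): write
\begin{align*}
d_r(\beta A,\beta b,\beta c,\gamma\Rad{A},\gamma\Rad{b},\gamma\Rad{c})
&= \frac{1}{\|(\gamma\Rad{A},\gamma\Rad{b},\gamma\Rad{c})\|_F}\,
   d_w(\beta A,\beta b,\beta c,\gamma\Rad{A},\gamma\Rad{b},\gamma\Rad{c})\\
&= \frac{1}{\gamma\|(\Rad{A},\Rad{b},\Rad{c})\|_F}\cdot
   \gamma\, d_w(A,b,c,\Rad{A},\Rad{b},\Rad{c})\\
&= \frac{1}{\|(\Rad{A},\Rad{b},\Rad{c})\|_F}\, d_w(A,b,c,\Rad{A},\Rad{b},\Rad{c})
 = d_r(A,b,c,\Rad{A},\Rad{b},\Rad{c}),
\end{align*}
using $\|\gamma M\|_F=\gamma\|M\|_F$ for $\gamma>0$ and the fact, noted in the proof of (1), that $d_r$ does not depend on the nominal data scaling $\beta$ because $d_w$'s $\beta$-independence (from part (2)) cancels through. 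This completes the proof.
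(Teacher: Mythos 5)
Your proof is correct and follows essentially the same route as the paper: part (2) by a change of variables in the defining limit (using that jointly scaling $A,b,c$ by $\beta$ scales $f$ by $\beta$, and that scaling the radii by $\gamma$ reparametrizes $\alpha$), part (1) by identifying $d_r$ with the worst-case derivative taken over the box of radius $\tau(\Rad{A},\Rad{b},\Rad{c})$ so that $d_r=\tau\,d_w$, and part (3) by combining the two. If anything, you are more explicit than the paper about the one step it glosses over, namely that the maximum over $\mathfrak{B}$ of directional derivatives equals the derivative of $\alpha\mapsto\onum{f}(\imace{A}_{\alpha\tau},\ivr{b}_{\alpha\tau},\ivr{c}_{\alpha\tau})$ (an interchange of maximum and limit, which the paper asserts in a single sentence).
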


\begin{proof}\mbox{}
\begin{enumerate}[(1)]
\item
The largest increase of the optimal value function on $B$ is $\onum{f}(\imace{A}_\tau,\ivr{b}_\tau,\ivr{c}_\tau)$. Therefore,
\begin{align*}
d_r=\lim_{\alpha\to0^+} 
  \frac{\onum{f}(\imace{A}_{\alpha\tau},\ivr{b}_{\alpha\tau},\ivr{c}_{\alpha\tau})
    -f(A,b,c)}{\alpha}
 =\tau \cdot d_w.
\end{align*}
\item
By definition,
\begin{align*}
d_w(\beta A,\beta b,\beta c,\Rad{A},\Rad{b},\Rad{c})
&=\lim_{\alpha\to0^+} 
  \frac{\onum{f}(\beta \imace{A}_{\alpha/\beta},\beta \ivr{b}_{\alpha/\beta},
 \beta \ivr{c}_{\alpha/\beta})-\beta\cdot f(A,b,c)}{\alpha}\\
&=\lim_{\alpha\to0^+} 
  \frac{\onum{f}(\imace{A}_{\alpha/\beta},\ivr{b}_{\alpha/\beta},
  \ivr{c}_{\alpha/\beta})- f(A,b,c)}{\alpha/\beta}\\
&= d_w(A,b,c,\Rad{A},\Rad{b},\Rad{c}).
\end{align*}
Similarly,
\begin{align*}
d_w(A,b,c,\gamma\Rad{A},\gamma\Rad{b},\gamma\Rad{c})
&=\lim_{\alpha\to0^+} 
  \frac{\onum{f}(\imace{A}_{\gamma\alpha},\ivr{b}_{\gamma\alpha},
   \ivr{c}_{\gamma\alpha}) - f(A,b,c)}{\alpha}\\
&=\lim_{\alpha\to0^+}  \gamma
  \frac{\onum{f}(\imace{A}_{\gamma\alpha},\ivr{b}_{\gamma\alpha},
   \ivr{c}_{\gamma\alpha}) - f(A,b,c)}{\gamma\alpha}\\
&= \gamma\cdot d_w(A,b,c,\Rad{A},\Rad{b},\Rad{c}).
\end{align*}
\item
We have
\begin{align*}
d_r(\beta {A},\beta {b},\beta {c},\gamma\Rad{A},\gamma\Rad{b},\gamma\Rad{c})
&= \frac{d_w(\beta {A},\beta {b},\beta {c},\gamma\Rad{A},\gamma\Rad{b},
      \gamma\Rad{c})}{\|\gamma(\Rad{A},\Rad{b},\Rad{c})\|_F}\\
&= \frac{\gamma \cdot d_w(A,b,c,\Rad{A},\Rad{b},\Rad{c})}
   {\gamma\cdot \|(\Rad{A},\Rad{b},\Rad{c})\|_F}\\
&= d_r(A,b,c,\Rad{A},\Rad{b},\Rad{c}).
\qedhere
\end{align*}
\end{enumerate}
\end{proof}

\paragraph{Special cases.}
In the special case when we allow perturbation of one coefficient only, we obtain Proposition~\ref{propNondeg} the standard differentiability results (up to the absolute value); see Gal and Greenberg \cite[chap.~3]{GalGre1997}. That is, the primal and dual optimal solution entries are the derivatives of the optimal value function with respect to the objective and right-hand side vector entries, respectively.

\begin{corollary}
Let $x^*$ be the unique nondegenerate optimal solution and $y^*$ the dual optimal solution. 
\begin{enumerate}[(1)]
\item
If $\Rad{A}=0$, $\Rad{b}=0$ and $\Rad{c}=e_j$, then $d_w=d_r=x^*_j$.
\item
If $\Rad{A}=0$, $\Rad{b}=e_i$ and $\Rad{c}=0$, then $d_w=d_r=|y^*_i|$.
\item
If $\Rad{A}=e_i^{}e_j^T$, $\Rad{b}=0$ and $\Rad{c}=0$, then $d_w=d_r=|y^*_ix^*_j|$.
\end{enumerate}
\end{corollary}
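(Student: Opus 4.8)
The plan is to derive each of the three cases directly from Proposition~\ref{propNondeg} together with part~(1) of the preceding proposition, so no new machinery is needed. Recall that Proposition~\ref{propNondeg} gives
\[
d_w = |y^*|^T\Rad{A}_Bx^*_B + |y^*|^T\Rad{b}+ (x^*_B)^T\Rad{c},
\]
and part~(1) of the proposition just proved gives $d_r = d_w/\|(\Rad{A},\Rad{b},\Rad{c})\|_F$. Since in every one of the three cases exactly one perturbation pattern is a single unit (a unit vector $e_j$ or $e_i$, or a single-entry matrix $e_i e_j^T$) and the others vanish, the Frobenius norm $\|(\Rad{A},\Rad{b},\Rad{c})\|_F$ equals $1$; hence $d_r = d_w$ automatically in all three cases, and it suffices to compute $d_w$.

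For case~(1), substitute $\Rad{A}=0$, $\Rad{b}=0$, $\Rad{c}=e_j$ into the formula of Proposition~\ref{propNondeg}: the first two terms drop out and $(x^*_B)^T\Rad{c} = (x^*)^T e_j = x^*_j$, using that $x^*_N=0$ so $(x^*_B)^T(\Rad{c})_B = (x^*)^T e_j$ regardless of whether $j\in B$. For case~(2), substitute $\Rad{A}=0$, $\Rad{b}=e_i$, $\Rad{c}=0$: only the middle term survives, giving $|y^*|^T e_i = |y^*_i|$. For case~(3), substitute $\Rad{A}=e_i e_j^T$, $\Rad{b}=0$, $\Rad{c}=0$: only the first term survives, and $|y^*|^T (e_i e_j^T)_B x^*_B = |y^*|^T e_i e_j^T x^* = |y^*_i|\,|x^*_j|$; since $|y^*_i|\,|x^*_j| = |y^*_i x^*_j|$, this is the claimed value. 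The minor point worth a sentence is again the basic/nonbasic bookkeeping: writing $\Rad{A}_B x^*_B$ versus $\Rad{A}x^*$ and $\Rad{c}_B$ versus $\Rad{c}$ makes no difference because $x^*$ vanishes on $N$, so the compact and full expressions agree.

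I do not anticipate any real obstacle here; the statement is a corollary in the literal sense, and the only thing to be careful about is making the reduction $d_r = d_w$ explicit by computing the relevant Frobenius norms, and noting that complementary slackness ($x^*_N = 0$) is what lets us freely pass between the $B$-restricted and unrestricted forms of the formula.
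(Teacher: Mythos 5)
Your proof is correct and is exactly the derivation the paper intends (the paper states this as an immediate corollary of Proposition~\ref{propNondeg} and part~(1) of the normalization proposition, with no written proof). The substitutions, the observation that $\|(\Rad{A},\Rad{b},\Rad{c})\|_F=1$ in each case, and the remark that $x^*_N=0$ reconciles the $B$-restricted and full forms are all that is needed.
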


Now, we consider some other special situations, extending the above corollary. Let $\Rad{A}=0$ and $\Rad{b}=0$ and suppose that the optimal solution $x^*$ is not dual degenerate. By Propositions~\ref{propDwUpperBound} and~\ref{propDwAttain}, we get $d_w=x^*(B)^T\Rad{c}$ for some basis~$B$. Since $x^*=x^*(B)$ for every $B\in\mna{B}$, we simply have $d_w=(x^*)^T\Rad{c}$. 
Similarly, if $\Rad{A}=0$ and $\Rad{c}=0$ and the LP problem is not primal degenerate, then $d_w=|y^*|^T\Rad{b}$.  

Eventually, let  $\Rad{A}=0$, $\Rad{b}=0$ and $\Rad{c}=c\geq0$. Then 
\begin{align}\label{ofSpecCaseCnonneg}
\onum{f}(\imace{A}_\alpha,\ivr{b}_\alpha,\ivr{c}_\alpha)
=f(A,b,c+\alpha c)
=(1+\alpha)f(A,b,c),
\end{align}
from which $d_w=f(A,b,c)$.

\subsection{Other LP forms}\label{ssOtherLP}

Besides \nref{lpGen}, the feasible set of an LP problem can be formulated by a system of linear inequalities or by some other standard form. Basically, we can easily transform the other types to the form of \nref{lpGen} and apply our methods. More concretely, an LP problem in the form
\begin{align}\label{lpTypC}
\min\ c^Tx \st Ax\leq b,\ x\geq0
\end{align}
is transformed to 
\begin{align*}
\min\ c^Tx \st Ax+I_mx'=b,\ x,x'\geq0.
\end{align*}
The perturbation patterns are set to $\Rad{(A\mid I_m)}:=(\Rad{A}\mid 0)$ in order that the identity matrix is considered as fixed.

An LP problem in the form
\begin{align}\label{lpTypB}
\min\ c^Tx \st Ax\leq b
\end{align}
is transformed to 
\begin{align*}
\min\ c^Tx^1-c^Tx^2 \st Ax^1-Ax^2+I_mx'=b,\ x^1,x^2,x'\geq0.
\end{align*}
The perturbation pattern for the constraint matrix is set to $\Rad{(A\mid -A \mid I_m)}:=(\Rad{A}\mid\Rad{A}\mid 0)$ and analogously for the objective function. To ensure that the problems are equivalent, we should consider structured perturbations of the data taking into account correlations between the entries of $(A\mid -A \mid I_m)$. Such problems are very hard in general! Fortunately, we need not consider  structured perturbations and take into account such correlations since it was proved by Garajov\'{a} et al.~\cite{GarHla2019c} that the worst case optimal value $\onum{f}$ of an interval LP problem is not changed under this transformation (in contrast to other characteristics). This property is also valid when imposing nonnegative variables in linear equations, so we can use the standard transformation and formulate each LP problem as~\nref{lpGen}.

Nevertheless, the transformation to the canonical form need not be the best approach from the computational point of view. Consider, for example, the situation with $\Rad{A}=0$, $\Rad{c}=0$ and $\Rad{b}=b\geq0$ (which makes a perfect sense in network flow problems, among others). Then analogously to \nref{ofSpecCaseCnonneg} we get $d_w=f(A,b,c)$. However, the transformation to the form \nref{lpGen} induces the equation constraints, which are hard to deal with from our perspective.

Moreover, the LP form \nref{lpTypC} has other possible advantages. Since
\begin{align*}
\onum{f}(\imace{A}_\alpha,\ivr{b}_\alpha,\ivr{c}_\alpha)
=f(A+\alpha\Rad{A},b-\alpha\Rad{b},c+\alpha\Rad{c}),
\end{align*}
we have
\begin{align*}
d_w=\lim_{\alpha\to0^+} 
 \frac{f(A+\alpha\Rad{A},b-\alpha\Rad{b},c+\alpha\Rad{c})-f(A,b,c)}{\alpha},
\end{align*}
and we may proceed in a similar way as in Section~\ref{ssAlpha}. Even though the method was rather cumbersome, we do not need to process all sign vectors $s\in\{\pm1\}^m$. Again, it is open if a polynomial time method can be constructed. 

Regarding an LP problem in the form \nref{lpTypC}, a similar commentary can be given. By \cite{GarHla2019c},
\begin{align*}
\onum{f}(\imace{A}_\alpha,\ivr{b}_\alpha,\ivr{c}_\alpha)
={}&\max\ (c+\alpha\Rad{c})x^1-(c-\alpha\Rad{c})^Tx^2 
  \\&\stl (A+\alpha\Rad{A})x^1-(A-\alpha\Rad{A})x^2\leq b-\alpha\Rad{b},\ x^1,x^2\geq0.
\end{align*}
Therefore, this case can be handled in the same way as the form~\nref{lpTypC}.

\section{Examples}\label{sEx}

\begin{example}\label{exNestab}
Consider the LP problem \nref{lpGen} with
\begin{align*}
A=\begin{pmatrix}5& -7& 1\\7& -10& 1\end{pmatrix},\ \ 
b=\begin{pmatrix}1\\0\end{pmatrix},\ \ 
c=\begin{pmatrix}12\\-17\\\approx2\end{pmatrix}.
\end{align*}
The dual LP problem is illustrated in Figure~\ref{figExNestab}.
Let us set for the perturbation patterns as $\Rad{A}=|A|$, $\Rad{b}=|b|$ and $\Rad{c}=|c|$. 
The entry $c_3$ is approximately around~$2$, and we will consider three cases: $c_3=1.5$, $c_3=2.5$ and $c_3=2$. 

For $c_3=1.5$, we compute $d_w = 29.556$ and $d_r = 1.1494$, 
and for $c_3=2.5$, we compute $d_w = 479$ and $d_r = 18.571$. 
We see that the latter is much more unstable and sensitive to perturbations.
\begin{figure}[t]
\begin{center}
\psset{unit=6.5ex,arrowscale=1.5}
\begin{pspicture}(-3.5,-0.4)(5.1,4.3)
\psaxes[ticksize=2pt,labels=all,ticks=all]
{->}(0,0)(-3.2,-0.2)(2.9,4)
\newgray{mygray}{0.85}
\pspolygon[fillstyle=solid,fillcolor=mygray,linecolor=mygray,linewidth=0pt]
(-3,3.96)(1,1)(-3,3.7)
\psline[](-3,3.96)(1,1)(-3,3.7)
\psline[linestyle=dashed](-1.4,3.9)(2.3,0.2)
\psline[linestyle=dashed](-1.9,3.9)(1.8,0.2)
\psline[linestyle=dashed](-2.4,3.9)(1.3,0.2)
\psline[](3,2.8)(3,4)
\psline[]{->}(3,3.4)(3.5,3.4)
\uput[0](3.5,3.4){$\max$}

\uput[45](1.5,1){$y_1+y_2\leq c_3$}
\uput[-90](2.9,0){$y_1$}
\uput[180](0,4){$y_2$}
\uput[-135](-0.01,-0.01){$0$}
\end{pspicture}
\caption{(Example~\ref{exNestab}) The dual LP problem with the inequality $y_1+y_2\leq c_3$ having the values of $c_3\in\{1.5,\,2,\,2.5\}$.\label{figExNestab}}
\end{center}
\end{figure}
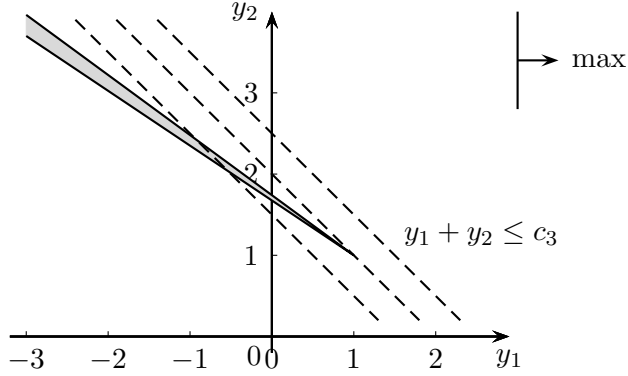

Now, let $c_3=2$, for which the problem is degenerate. There are two optimal bases, $B_1=\{1,2\}$ and $B_2=\{2,3\}$. The first one corresponds to the basic optimal solution $x^{B_1}=(10,7,0)^T$ and dual optimal solution $y^*=(1,1)^T$. We compute $d_w(B_1) = 479$ and $d_r(B_1) = 18.603$. The second basis is associated with the basic optimal solution $x^{B_2}=(0,\frac{1}{3},\frac{10}{3})^T$, and the dual optimal solution is the same, i.e., $y^*$. The corresponding derivatives are $d_w(B_2) = 25.667$ and $d_r(B_2) = 0.99681$. So by Proposition~\ref{propDwUpperBound}, the worst-case derivative satisfies $d_w\leq d_w(B_1) = 479$ and $d_r\leq d_r(B_1) = 18.603$.
These upper bounds are not tight. In view of \nref{thmOvrTypaOnumf}, it is not hard to compute analytically the exact values of the derivatives. The worst case optimal value $\onum{f}(\imace{A}_\alpha,\ivr{b}_\alpha,\ivr{c}_\alpha)$ is attained for basis $B_2$ and for the scenario 
\begin{align*}
A'=A-\alpha|A|,\ \ 
b'=b+\alpha|b|,\ \ 
c'=c+\alpha|c|.
\end{align*}
The corresponding optimal solution is $x(\alpha)=\big(0,\frac{1}{3},\frac{10(1+\alpha)}{3(1-\alpha)}\big)^T$, and the optimal value reads
$
\frac{3+74\alpha+3\alpha^2}{3(1-\alpha)}.
$
The derivative has the value of
$$
d_w
=\lim_{\alpha\to0^+} \frac{\frac{3+74\alpha+3\alpha^2}{3(1-\alpha)}-1}{\alpha}
=\lim_{\alpha\to0^+} \frac{77+3\alpha}{3(1-\alpha)}=\frac{77}{3}.
$$
Therefore $d_w=\frac{77}{3}\approx 25.667$ and $d_r\approx 0.99681$. We see that $d_w=d_w(B_2)$ and $d_r=d_r(B_2)$ in correspondence with Proposition~\ref{propDwAttain}.
\end{example}

\begin{example}\label{exCube}
Consider the example of maximizing $L_1$ norm on a unite $n$-dimensional hypercube
$$
\max\ e^Tx \st -e\leq x\leq e.
$$
If we put $\Rad{A}=|A|$, $\Rad{b}=|b|$ and $\Rad{c}=|c|$, then
$$
d_w=3n, \quad
d_r=\frac{3}{5}\sqrt{5n}.
$$
For $\Rad{A}=ee^T$, $\Rad{b}=e$ and $\Rad{c}=e$ we get
$$
d_w=n(n+2), \quad
d_r=\frac{n(n+2)}{\sqrt{n(2n+3)}}.
$$
This confirms that the problem is stable -- the directional derivative $d_r$ grows as square root and linearly in the dimension, respectively.

Consider now an example in which the hypercube is subject to a transformation by a matrix $A\in\R^{n\times n}$. Thus the LP problem draws
\begin{align}\label{lpCubeA}
\max\ c^Tx \st -e\leq Ax\leq e.
\end{align}
The objective vector $c$ is generated as a random vector with entries uniformly in $[-1,1]$. For matrix $A$ we consider four cases:
\begin{itemize}
\item
the identity matrix $I_n$,
\item
a random $n$-by-$n$ matrix with entries generated uniformly in $[-1,1]$,
\item
the Vandermonde matrix
$$
\begin{pmatrix}
v_1^{n-1}&\dots&v_1&1\\
v_2^{n-1}&\dots&v_2&1\\
\vdots &&\vdots&\vdots\\
v_n^{n-1}&\dots&v_n&1\\
\end{pmatrix}
$$
generated by vector $v=(v_1,\dots,v_n)=\big(\frac{1}{n},\frac{2}{n},\dots,\frac{n}{n}\big)$.
\item
the Hilbert matrix $H$ with entries $H_{ij}=\frac{1}{i+j-1}$.
\end{itemize}
The perturbation patterns are set up as $\Rad{A}=|A|$, $\Rad{b}=|b|$ and $\Rad{c}=|c|$. Table~\ref{tabCube} displays the results computed in \textsf{MATLAB R2017b}.

\begin{table}[t]
\caption{(Example~\ref{exCube}) Derivatives for the LP problem \nref{lpCubeA} with identity, random, Vandermonde and Hilbert matrices in the constraints and random objective functions.\label{tabCube}}
\begin{center}
\begin{tabular}{@{}ccccccccc@{}}
 \toprule
$n$ & \multicolumn{2}{c}{$A=I_n$}& \multicolumn{2}{c}{random}
    & \multicolumn{2}{c}{Vandermonde}& \multicolumn{2}{c}{Hilbert}\\
\cmidrule(lr){2-3}\cmidrule(lr){4-5}\cmidrule(lr){6-7}\cmidrule(lr){8-9}
& $d_w$ & $d_r$ & $d_w$ & $d_r$ & $d_w$ & $d_r$  & $d_w$ & $d_r$\\
\midrule 
 2& 2.545 & 0.876 & 3.953 & 1.338 & 23.35 & 8.287 & 144.3 & 52.13 \\ 
 3& 7.055 & 1.894 & 9.747 & 2.693 & 56.67 & 9.801 & $1.2\cdot10^4$ & 3468  \\ 
 4& 5.392 & 1.308 & 26.60 & 6.295 & 2034  & 78.66 & $1.4\cdot10^7$ & $3.7\cdot10^6$ \\ 
 5& 6.646 & 1.436 & 152.3 & 30.36 & $1.2\cdot10^5$ & 641.7 & $3.6\cdot10^{10}$ & $9.0\cdot10^9$ \\ 
 6& 9.640 & 1.887 & 106.0 & 17.75 & $2.9\cdot10^6$ & 1457 & $4.8\cdot10^{13}$ & $1.1\cdot10^{13}$ \\ 
 7& 13.76 & 2.468 & 27.09 & 3.777 & $8.0\cdot10^{7}$ & 3063 & $8.7\cdot10^{16}$ & $1.8\cdot10^{16}$ \\ 
 8& 9.683 & 1.666 & 92.10 & 12.20 & $5.5\cdot10^{7}$  & 136.3 & -- & -- \\ 
 9& 14.66 & 2.342 & 205.8 & 23.59 & $2.8\cdot10^{9}$  & 376.2 & -- & --  \\ 
10& 16.54 & 2.498 & 5251  & 575.2 & $1.9\cdot10^{10}$ & 120.5 & -- & --  \\ 
 \bottomrule
\end{tabular}
\end{center}
\end{table}

As expected, the first case with the identity matrix is very stable with low derivatives. Random matrices evince large but still mild derivatives. Vandermonde matrix causes very large derivatives and makes the problem highly unstable. The extremal case is the fourth example with Hilbert matrices. They are known to be ill-conditioned, which is reflected by huge derivatives -- indeed, for $n\geq8$ the LP solver is unable to compute the optimal solution.
\end{example}

\begin{example}\label{exBench}
Here we consider data from Netlib Repository (\url{http://www.netlib.org/}). The results are displayed in Table~\ref{tabBench}. Since the optimal solutions of all tested examples were either not unique or degenerate, we display the derivatives related to the resulting basis. For comparison, we slightly and randomly perturbed the data such that the problems become non-degenerate; more concretely, we perturb the data uniformly at random up to $0.005\%$ of their nominal values. The corresponding results are shown on odd rows.
\begin{table}[t]
\caption{(Example~\ref{exBench}) Derivatives for Netlib benchmark data. The odd rows are for perturbed problems. Herein, \quo{vars} denotes the number of variables and \quo{constr} the number of constraints.\label{tabBench}}
\begin{center}
\addtolength{\tabcolsep}{1pt}
\begin{tabular}{@{}cccccc@{}}
 \toprule
name & vars & constr & $d_w(B)$ & $d_r(B)$ & $f(A,b,c)$\\
\midrule 
AGG2 & 302 & 516 & $3.0\cdot 10^8$ & 97.8 & $-2.0\cdot 10^7$\\
               &&& $3.0\cdot 10^8$ & 97.75 & $-2.0\cdot 10^7$\\
BANDM &472 & 305 & 4686 & 4.128  & $-158.6$ \\
               &&& 7584 & 6.707  & $-78.44$ \\
BOEING1&384& 440 & 3644 & 0.2683 & $-335.2$\\
               &&& 3633 & 0.2674 & $-335.2$\\
CAPRI & 353& 271 & $1.5\cdot10^5$& 20.4 & 2690\\
               &&& $1.5\cdot10^5$& 20.39 & 2690\\
FIT1P &1677& 627 & 56010 & 4.96 & 9146 \\
               &&& 56010 & 4.961 & 9146 \\
FIT2P &13525&3000& $4.0\cdot10^5$& 34.73 & 68460\\
               &&& $4.0\cdot10^5$& 34.73 & 68460\\
GREENBEA &5405& 2392& $4.7\cdot10^9$& $3.4\cdot10^6$& $-7.3\cdot10^7$\\
                  &&& $8.5\cdot10^9$& $6.2\cdot10^6$& $-7.5\cdot10^7$\\
GREENBEB &5405& 2392& $1.7\cdot10^7$& 12650 & $-4.3\cdot10^6$\\
                  &&& $2.4\cdot10^7$& 17990 & $-4.3\cdot10^6$\\
ISREAL &142 & 174 & $4.0\cdot10^6$ & 3.892 & $-9.0\cdot10^5$\\
                &&& $4.0\cdot10^6$ & 3.892 & $-9.0\cdot10^5$\\
MAROS & 1443 & 846 & $2.4\cdot10^6$ &  15.7 & $-58060$\\
                 &&& $2.4\cdot10^6$ &  15.77 & $-58060$\\
PILOT & 3652 & 1441 & 13140 & 1.654 & $-557.5$\\
                  &&& 13130 & 1.652 & $-557.5$\\
SC105 & 103  & 105 & 1311  & 1.853 & $-52.2$\\
      &      &     & 1311  & 1.853 & $-52.2$\\
SCSD1 & 760  & 77  & 50.42 & 0.6369 & 8.667\\
                 &&& 66    & 0.8337 & 8.667\\
SCFXM2 & 914 & 660 & $4.1\cdot10^5$ & 70.8  & 36660\\
                 &&& $5.5\cdot10^6$ & 959.7 & 33110\\
SHIP04L & 2118 & 402 &$8.8\cdot10^6$ & 320.5 & $1.8\cdot10^6$\\
                  &&& $8.8\cdot10^6$ & 320.5 & $1.8\cdot10^6$\\
STANDATA & 1075 & 359 & 7756 & 2.684 & 1258\\
                    &&& 7936 & 2.746 & 1257\\
STOCFOR2 & 2031 & 2157 & $7.1\cdot10^5$ & 64.78 & $-39020$\\
                     &&& $7.1\cdot10^5$ & 64.78 & $-39020$\\
TRUSS & 8806 & 1000 & $1.7\cdot10^7$ & 9652 &  $4.6\cdot10^5$\\
                  &&& $1.7\cdot10^7$ & 9965 & $4.6\cdot10^5$\\
 \bottomrule
\end{tabular}
\end{center}
\end{table}

We can see that $d_w(B)$ and $d_r(B)$ are good approximations of $d_w$ and $d_r$. In many cases they are the same or differ only slightly. There are only few examples (BANDM, GREENBEA, GREENBEB, SCSD1 and SCFXM2), where the difference is more significant. For problem SCFXM2 in particular, the change is in the order of magnitude. This indicates that $d_w(B)$ and $d_r(B)$ can sometimes underestimate the true value.
\end{example}

\section{Conclusion}


We introduced the concept of a derivative of the optimal value function in linear programming. It is based on the worst case optimal value in the neighborhood when the data are inflated to intervals along some given perturbation patterns. 
The derivative shows how the optimal value can worsen when data are varying. It also gives some measure of sensitivity of an LP problem.
As an alternative, we introduced also a directional derivative, which is a normalized version of the above type derivative. Due to the normalization, it serves as a good sensitivity measure or a condition number of LP problems.

For a nondegenerate LP problem, the derivatives are easily computable from the primal and dual optimal solutions. For degenerate problems, the computation is more difficult; indeed, it is NP-hard for equality constrained problems. The computational complexity for inequality constrained problems remains unknown. Furthermore, we proposed an upper bound and also showed that the derivative is attained for a certain optimal  basis. Nevertheless, it is an open problem  how to recognize that the derivative is attained for a given basis and to determine its complexity.

The numerical examples and experiments that we caried out show that the derivatives reflect the sensitivities of LP problems to data variations. For degenerate problems, the computed basis gives a reasonable approximation of the true derivatives in most of the cases. On the other hand, it may sometimes underestimate, so one has to be careful when dealing with degenerate problems.

\subsubsection*{Acknowledgments.} 
The author was supported by the Czech Science Foundation Grant P403-22-11117S.


\bibliographystyle{abbrv}
\bibliography{drce_lp}

\begin{thebibliography}{10}

\bibitem{ArsObl1990}
H.~Arsham and M.~Oblak.
\newblock Perturbation analysis of general {LP} models: {A} unified approach to
  sensitivity, parametric, tolerance, and more-for-less analysis.
\newblock {\em Math. Comput. Modelling}, 13(8):79--102, 1990.

\bibitem{BorBuz2018}
E.~Borgonovo, G.~T. Buzzard, and R.~E. Wendell.
\newblock A global tolerance approach to sensitivity analysis in linear
  programming.
\newblock {\em Eur. J. Oper. Res.}, 267(1):321--337, 2018.

\bibitem{CheuCuc2003}
D.~Cheung, F.~Cucker, and J.~Pe{\~n}a.
\newblock Unifying condition numbers for linear programming.
\newblock {\em Math. Oper. Res.}, 28(4):609--624, 2003.

\bibitem{CouCro2009}
A.~Coulibaly and J.-P. Crouzeix.
\newblock Condition numbers and error bounds in convex programming.
\newblock {\em Math. Program.}, 116(1-2):79--113, 2009.

\bibitem{CurLee2022}
S.~Curry, I.~Lee, S.~Ma, and N.~Serban.
\newblock Global sensitivity analysis via a statistical tolerance approach.
\newblock {\em Eur. J. Oper. Res.}, 296(1):44--59, 2022.

\bibitem{Fie2006}
M.~Fiedler, J.~Nedoma, J.~Ram\'{\i}k, J.~Rohn, and K.~Zimmermann.
\newblock {\em Linear Optimization Problems with Inexact Data}.
\newblock Springer, New York, 2006.

\bibitem{Fil2005}
C.~Filippi.
\newblock A fresh view on the tolerance approach to sensitivity analysis in
  linear programming.
\newblock {\em Eur. J. Oper. Res.}, 167(1):1--19, 2005.

\bibitem{Fre1985}
R.~M. Freund.
\newblock Postoptimal analysis of a linear program under simultaneous changes
  in matrix coefficients.
\newblock In R.~W. Cottle, editor, {\em Mathematical Programming Essays in
  Honor of {George B. Dantzig, Part I}}, volume~24 of {\em Mathematical
  Programming Studies}, pages 1--13. Springer, Berlin, Heidelberg, 1985.

\bibitem{FreVer2003}
R.~M. Freund and J.~R. Vera.
\newblock On the complexity of computing estimates of condition measures of a
  conic linear system.
\newblock {\em Math. Oper. Res.}, 28(4):625--648, 2003.

\bibitem{GabMur2008}
V.~Gabrel, C.~Murat, and N.~Remli.
\newblock Best and worst optimum for linear programs with interval right hand
  sides.
\newblock In H.~A. {Le Thi}~et al., editor, {\em Modelling, Computation and
  Optimization in Information Systems and Management Sciences. Second
  International Conference MCO 2008, Metz, France. Proceedings}, pages
  126--134, Berlin, 2008. Springer.

\bibitem{GabMur2010}
V.~Gabrel, C.~Murat, and N.~Remli.
\newblock Linear programming with interval right hand sides.
\newblock {\em Int. Trans. Oper. Res.}, 17(3):397--408, 2010.

\bibitem{Gal1995}
T.~Gal.
\newblock {\em Postoptimal Analyses, Parametric Programming, and Related
  Topics}.
\newblock de Gruyter, New York, second edition, 1995.

\bibitem{GalGre1997}
T.~Gal and H.~J. Greenberg, editors.
\newblock {\em Advances in Sensitivity Analysis and Parametric Programming}.
\newblock Kluwer Academic Publishers, Boston, 1997.

\bibitem{GarHla2019c}
E.~Garajov\'{a}, M.~Hlad\'{\i}k, and M.~Rada.
\newblock Interval linear programming under transformations: optimal solutions
  and optimal value range.
\newblock {\em Cent. Eur. J. Oper. Res.}, 27(3):601--614, Sep 2019.

\bibitem{GolLoa1996}
G.~H. Golub and C.~F. Van~Loan.
\newblock {\em Matrix Computations}.
\newblock Johns Hopkins University Press, Baltimore, 3rd edition, 1996.

\bibitem{HadTer2006b}
A.~G. Hadigheh and T.~Terlaky.
\newblock Sensitivity analysis in linear optimization: {Invariant} support set
  intervals.
\newblock {\em Eur. J. Oper. Res.}, 169(3):1158--1175, 2006.

\bibitem{HanWal2004}
E.~R. Hansen and G.~W. Walster.
\newblock {\em Global Optimization Using Interval Analysis}.
\newblock Marcel Dekker, New York, second edition, 2004.

\bibitem{Hig1996}
N.~J. Higham.
\newblock {\em Accuracy and Stability of Numerical Algorithms}.
\newblock SIAM, Philadelphia, 1996.

\bibitem{Hla2009b}
M.~Hlad\'{\i}k.
\newblock Optimal value range in interval linear programming.
\newblock {\em Fuzzy Optim. Decis. Mak.}, 8(3):283--294, 2009.

\bibitem{Hla2010b}
M.~Hlad\'{\i}k.
\newblock Multiparametric linear programming: support set and optimal partition
  invariancy.
\newblock {\em Eur. J. Oper. Res.}, 202(1):25--31, 2010.

\bibitem{Hla2011c}
M.~Hlad\'{\i}k.
\newblock Tolerance analysis in linear systems and linear programming.
\newblock {\em Optim. Methods Softw.}, 26(3):381--396, 2011.

\bibitem{Hla2012a}
M.~Hlad\'{\i}k.
\newblock {Interval linear programming: A survey}.
\newblock In Z.~A. Mann, editor, {\em Linear Programming -- New Frontiers in
  Theory and Applications}, chapter~2, pages 85--120. Nova Science Publishers,
  New York, 2012.

\bibitem{Hla2014d}
M.~Hlad\'{\i}k.
\newblock On approximation of the best case optimal value in interval linear
  programming.
\newblock {\em Optim. Lett.}, 8(7):1985--1997, 2014.

\bibitem{LuBor2022p}
X.~Lu and E.~Borgonovo.
\newblock Global sensitivity analysis in epidemiological modeling.
\newblock {\em Eur. J. Oper. Res.}, 2022.
\newblock in press, doi: 10.1016/j.ejor.2021.11.018.

\bibitem{Luc2012}
R.~Lucchetti.
\newblock Comments on: {Stability} in linear optimization and related topics.
  {A} personal tour.
\newblock {\em TOP}, 20(2):258--260, 2012.

\bibitem{Mach1970}
B.~Machost.
\newblock {Numerische Behandlung des Simplexverfahrens mit
  intervallanalytischen Methoden}.
\newblock Technical Report~30, {Berichte der Gesellschaft f\"{u}r Mathematik
  und Datenverarbeitung, 54 pages}, Bonn, 1970.

\bibitem{Mang1981}
O.~L. Mangasarian.
\newblock A condition number for linear inequalities and linear programs.
\newblock {\em Meth. Oper. Res.}, 43:3--15, 1981.

\bibitem{MehHad2021}
N.~Mehanfar and A.~G. Hadigheh.
\newblock Advances in induced optimal partition invariancy analysis in
  uni-parametric linear optimization.
\newblock {\em J. Math. Model.}, 9(2):145--172, 2021.

\bibitem{MehHad2022p}
N.~Mehanfar and A.~G. Hadigheh.
\newblock Optimal partition invariancy in multi-parametric linear optimization.
\newblock {\em J. Math. Model.}, 2021.
\newblock in press, doi: 10.22124/jmm.2022.20758.1809.

\bibitem{MohGen2019}
M.~Mohammadi and M.~Gentili.
\newblock Bounds on the worst optimal value in interval linear programming.
\newblock {\em Soft Comput.}, 23(21):11055--11061, 2019.

\bibitem{MooKea2009}
R.~E. Moore, R.~B. Kearfott, and M.~J. Cloud.
\newblock {\em Introduction to Interval Analysis}.
\newblock SIAM, Philadelphia, PA, 2009.

\bibitem{MosSol2014}
A.~Mostafaee and M.~Soleimani-damaneh.
\newblock Identifying the anchor points in {DEA} using sensitivity analysis in
  linear programming.
\newblock {\em Eur. J. Oper. Res.}, 237(1):383--388, 2014.

\bibitem{Mra1998}
F.~Mr\'az.
\newblock Calculating the exact bounds of optimal values in {LP} with interval
  coefficients.
\newblock {\em Ann. Oper. Res.}, 81:51--62, 1998.

\bibitem{Ren1994}
J.~Renegar.
\newblock Some perturbation theory for linear programming.
\newblock {\em Math. Program.}, 65:73--91, 1994.

\bibitem{Ren1995}
J.~Renegar.
\newblock Incorporating condition measures into the complexity theory of linear
  programming.
\newblock {\em SIAM J. Optim.}, 5(3):506--524, 1995.

\bibitem{Roh1976}
J.~Rohn.
\newblock Soustavy line\'{a}rn\'{i}ch rovnic s intervalov\v{e} zadan\'{y}mi
  koeficienty ({Systems} of linear equations with inexact data).
\newblock {\em Ekonom.-Mat. Obzor}, 12:311--315, 1976.
\newblock in {Czech}.

\bibitem{Roh1984}
J.~Rohn.
\newblock Interval linear systems.
\newblock Freiburger Intervall-Berichte 84/7, Albert-Ludwigs-Universit{\"a}t,
  Freiburg, 1984.

\bibitem{Roh1989b}
J.~Rohn.
\newblock On sensitivity of the optimal value of a linear program.
\newblock {\em Ekonom.-Mat. Obzor}, 25(1):105--107, 1989.

\bibitem{Roh1997}
J.~Rohn.
\newblock Complexity of some linear problems with interval data.
\newblock {\em Reliab. Comput.}, 3(3):315--323, 1997.

\bibitem{SonXu2015}
W.~Song, X.~Xu, and J.-C. Yao.
\newblock A note on the condition numbers of convex sets.
\newblock {\em Math. Program.}, 149:459--463, 2015.

\bibitem{TigSta2000}
S.~Tigan and I.~M. Stancu-Minasian.
\newblock On {Rohn}'s relative sensitivity coefficient of the optimal value for
  a linear-fractional program.
\newblock {\em Math. Bohem.}, 125(2):227--234, 2000.

\bibitem{WarWen1990}
J.~E. Ward and R.~E. Wendell.
\newblock Approaches to sensitivity analysis in linear programming.
\newblock {\em Ann. Oper. Res.}, 27:3--38, 1990.

\bibitem{Wen1997}
R.~E. Wendell.
\newblock Linear programming. {III}: {The} tolerance approach.
\newblock In T.~Gal~et al., editor, {\em Advances in sensitivity analysis and
  parametric programming}, chapter~5, pages 1--21. Kluwer, Dordrecht, 1997.

\bibitem{ZamHla2020ab}
M.~Zamani and M.~Hlad\'{\i}k.
\newblock Error bounds and a condition number for the absolute value equations.
\newblock {\em Math. Program.}, 2022.
\newblock in press, doi: 10.1007/s10107-021-01756-6.

\end{thebibliography}

\end{document}